\def\N{\mathbb{N}}
\def\R{\mathbb{R}}
\def\H{\mathbb{H}}
\def\Ric{{\rm Ric}}
\def\A{\mathcal{A}}
\def\B{\mathcal{B}}
\def\r{\rho}
\def\RR{\mathcal{R}}
\def\M{\mathcal{M}}
\def\NN{\mathcal{N}}
\def\W{\mathcal{W}}
\def\T{\mathcal{T}}
\def\HH{\mathcal{H}}
\def\e{\text{\rm{e}}}
\def\MM{\mathrm{M}}
\def\E{\mathcal{E}}
\def\K{\mathcal{K}}
\def\d{\text{\rm{d}}}
\def\u{u}
\def\v{v}
\def\x{x}
\def\y{y}
\def\X{\mathrm{X}}
\def\Y{\mathrm{Y}}
\def\xx{\mathrm{x}}
\def\yy{\mathrm{y}}
\def\zz{\mathrm{z}}
\def\<{\langle} \def\>{\rangle}
\DeclareMathOperator{\loc}{loc}
\DeclareMathOperator{\diam}{diam}
\newtheorem{theo}{\textsc{Theorem}}
\newtheorem{lem}[theo]{\textsc{Lemma}}
\newtheorem{propo}[theo]{\textsc{Proposition}}
\newtheorem{cor}[theo]{\textsc{Corollary}}
\newtheorem{remark}{Remark}
\title{Weak type $(1,  1)$ of Riesz transform on some direct product manifolds with exponential volume growth}
\author{Hong-Quan Li and Jie-Xiang Zhu}
\date{
}
\begin{document}

\renewcommand{\theequation}{\thesection.\arabic{equation}}
\setcounter{equation}{0} \maketitle

\vspace{-1.0cm}

\bigskip

{\bf Abstract.} In this paper we are concerned with the Riesz transform on the direct product manifold $\H^n \times M$, where $\H^n$ is the $n$-dimensional real hyperbolic space and $M$ is a connected complete non-compact Riemannian manifold satisfying the volume doubling property and generalized Gaussian or sub-Gaussian upper estimates for the heat kernel. We establish its weak type $(1,1)$ property. In addition, we obtain the weak type $(1, 1)$ of the heat maximal operator in the same setting. Our arguments also work for a large class of direct product manifolds with exponential volume growth. Particularly, we provide a simpler proof of weak type $(1,1)$ boundedness of some operators considered in the work of Li, Sj\"ogren and Wu \cite{LSW16}.

\medskip

{\bf Mathematics Subject Classification (2020):} {\bf 42B20,  58J35}

\medskip

{\bf Key words and phrases:} Riesz transform; Heat kernel; Direct product; Real hyperbolic space

\medskip

\renewcommand{\theequation}{\thesection.\arabic{equation}}
\section{Introduction and main results}
\setcounter{equation}{0}
\medskip

The weak $(1, 1)$ and $L^p$ boundedness of Riesz transform are one of the classic topics in harmonic analysis. In the last several decades, they have been investigated extensively in the setting of Riemannian manifolds. To formulate these problems, we first list the following terminologies and notations that will be used throughout the whole paper. Let $(M, g)$ be a connected complete non-compact Riemannian manifold. Let us denote by $\d \mu_M$ the Riemannian measure and by $d_M$ the geodesic distance. Denote by $B(x, R)$ the open ball of center $x$ and of radius $R > 0$. Set $V_M(x, R) = \mu_M(B(x, R))$. Let $\nabla_M$ be the Riemannian gradient, $\Delta_M$ be the Laplace-Beltrami operator on $M$. Denote by $(\e^{t\Delta_M})_{t>0}$ the heat semigroup associated with $\Delta_M$ and $p_t^M(x, y)$ the heat kernel. By spectral analysis,
$$(-\Delta_M)^{-\frac{1}{2}} = \frac{1}{\sqrt{\pi}} \int_{0}^{\infty} \e^{t\Delta_M} \, \frac{\d t}{\sqrt{t}}.$$
Then the Riesz transform $\RR$ on $M$ is defined by $\nabla_M (-\Delta_M)^{-\frac{1}{2}}$.  In the sequel, we use the notation $A \lesssim B$ if there exists a universal constant $C > 0$ such that $A \leq C B$, where the constant $C$ may only depend on the properties of the underlying manifolds. We write $A \sim B$, if $A \lesssim B$ and $B \lesssim A$. We also use $A \lesssim_\varepsilon B$ to denote $A \le C_\varepsilon B$, where the constant $C_\varepsilon$ depends on the parameter $\varepsilon$. In the present work, we shall consider the weak type $(1, 1)$ property of the Riesz transform on $M$, namely
\begin{align} \label{W11}
\mu_M \left( \left\{ x ; \, |\nabla_M (-\Delta_M)^{-\frac12} f(x)| > \lambda \right\} \right) \, \lesssim \, \frac{\| f \|_1}{\lambda}, \quad \forall \, \lambda > 0, \, f \in C_0^\infty(M);
\end{align}
and its $L^p$ boundedness
properties, namely, for which $1 < p < +\infty$,
\begin{align} \label{lpe}
\|\nabla_M (-\Delta_M)^{-\frac{1}{2}}f \|_p  \, \lesssim \, \| f \|_p, \quad \forall \, f \in C_0^{\infty}(M).
\end{align}
Since the case $p = 2$ is a direct result of integration by parts, then \eqref{lpe} for $1< p <2$ follows from \eqref{W11} by interpolation. The classical singular integral operator theory says that on the Euclidean space, \eqref{W11} and \eqref{lpe} are valid for all $1 < p < +\infty$ (see e.g. \cite[Chapter III]{S70}). In contrast with the classical result, for any given $p_0 > 2$, there exist stochastically complete Riemannian manifolds, on which \eqref{lpe} is valid iff $1 < p < p_0$, and even iff $1 < p \le 2$ (see e.g. \cite{CD99, Li99} for more details). It is worthwhile to mention that there are various recent counterexamples in the literature. 
The appendix below contains a brief discussion of \eqref{lpe} for $p > 2$ in our setting. In this paper we mainly focus on \eqref{W11}.

\medskip

\subsection{Volume doubling property and heat kernel upper estimates}

\medskip

For the further purpose, it is worthwhile discussing some basic assumptions on $M$ and $p_t^M$. We say that $M$ satisfies the volume doubling property if there exists $D > 0$, such that
$$V_M (x, 2R)  \leq  D \, V_M(x, R), \qquad \forall \, x \in M, \,  R > 0. \eqno(D) $$

Given $m \geq 2$. Suppose that there exists some $c \in (0, 1)$, for all $x, y \in M$,
\begin{equation*}
p_t^M (x, y)  \, \leq \, c^{-1}
\begin {cases}
\frac{1}{V_M \left( x , \, \sqrt{t} \right)} \exp{\left( - c \, \frac{d_M(x, \, y)^2}{t} \right)},  & \text{if \, $0 < t < 1$;} \\
\\
\frac{1}{V_M \left( x, \, t^{\frac{1}{m}} \right)} \exp{\left(-c \, \left( \frac{d_M(x, \, y)^m}{t} \right)^{\frac{1}{m - 1}} \right)},  & \text{if \, $t \geq 1$.} \\
\end {cases} \eqno(UE_m)
\end{equation*}
The heat kernel estimate ($UE_2$) (respectively, ($UE_m$) for $m > 2$) is called Gaussian upper estimate (respectively, sub-Gaussian upper estimate with the exponent $m$).  ($UE_2$) is also known as Li-Yau upper estimate. If $M$ satisfies ($D$), then ($UE_2$) is equivalent to the relative Faber-Krahn inequality (see e.g.  \cite[Chapter 15]{G09}  for more details). While ($UE_m$) for $m > 2$ holds on some fractal-like manifolds (see \cite{CCFR16}).

Several different generalized forms of ($UE_m$) have been considered by other authors (see, e.g.,  \cite{DM99, BK03, BK04}). However, we need to introduce a new one in our context. As we know, ($UE_m$) is unfortunately not stable under products, i.e. provided that $M_1$ and $M_2$ are connected complete non-compact Riemannian manifolds satisfying  ($UE_{m_1}$) and ($UE_{m_2}$) with $m_1 \neq m_2$ respectively, then ($UE_m$) does not hold for any $m \geq 2$ on $M_1 \times M_2$ in general (see  \cite[\S 2]{LZ17}).
On the other hand, when we investigate Riesz transform on product manifolds, sometimes the geodesic distance
does not directly play a role so that we have to define other distances (or even quasi-distances) and work within the new framework (see \cite{LZ17}). Let $\widetilde{d}$ denote a quasi-distance in the sense of Coifman-Weiss on $M$ (see \cite{CW71}). Besides we assume that $\widetilde{d}$ induces the topology of $M$. Set
\begin{align*}
 \widetilde{B}(x, R) = \{y \in M; \, \widetilde{d}(x, y) < R \}, \qquad \widetilde{V}_M (x, R) = \mu_M \big(\widetilde{B}(x, R)\big), \qquad \forall \, x \in M, \, R > 0.
\end{align*}
We naturally hope our generalization is compatible with $\widetilde{d}$. These facts lead us to the following definition. Given $0 < \theta  \leq 1$, we say that $M$ satisfies the (so-called) generalized heat kernel upper estimates with the exponent $\theta$ w.r.t. the quasi-distance $\widetilde{d}$, if there exists a constant $c \in (0,1)$ such that
$$p_t^M (x, y) \leq c^{-1} \frac{1}{\widetilde{V}_M \left(x, \sqrt{t}\right)} \exp{\bigg(- c \, \bigg(\frac{\widetilde{d}(x, y)^2}{t}\bigg)^{\theta}\bigg)},  \quad \forall \, t > 0, \, x, y \in M. \eqno(GUE_{\theta}) $$
In the sequel, to avoid confusing of notations, we rewrite the volume doubling property of $(M, \widetilde{d}, \d\mu_M)$ as (for some constant $\widetilde{D} > 0$)
$$\widetilde{V}_M (x, 2R) \leq \widetilde{D} \, \widetilde{V}_M (x, R), \qquad \forall \, x \in M, \,  R > 0. \eqno(ND) $$
Note that ($ND$) implies the following stronger property: there exist positive constants $D'$ and $\upsilon$, such that
\begin{align}  \label{nd}
\widetilde{V}_M (x, R_1) \leq D' \bigg(1 + \frac{R_1}{R_2} \bigg)^\upsilon \widetilde{V}_M (x, R_2), \qquad \forall \, x \in M, \, R_1,R_2 > 0.
\end{align}

In addition, the following assumptions on weighted estimates of $\nabla_M p_t^M(x, y)$ \footnote{Hereafter, for any smooth function $u(x, y)$ defined on $M \times M$, we adopt the shorthand notation $\nabla_M u(x, y) := \nabla_{M, x}u(x, y)$.} will be requested. Given $1 \leq q \leq 2$ and $0 < \theta  \leq 1$, we say that $M$ satisfies weighted gradient integral estimates of the heat kernel with the exponent $(q, \theta)$ w.r.t. $\widetilde{d}$, if there exist constants $c, c' > 0$ such that for all $t > 0$, $y \in M$,
$$\int_M |\nabla_M p_t^M (x, y)|^q \exp{\bigg( c \bigg(\frac{\widetilde{d}(x, y)^2}{t}\bigg)^{\theta}\bigg)} \, \d\mu_M (x) \leq \frac{c'}{t^{\frac{q}{2}} \widetilde{V}_M \left(y, \sqrt{t}\right)^{q - 1}}.
\eqno(WGE_{q, \theta}) $$
It is worth noting that in the case $\widetilde{d} = d_M$, ($WGE_{2, 1}$) is deduced from ($ND$) and ($GUE_{1}$), see e.g. \cite{CD99}. It has recently been pointed out implicitly in \cite{CCFR16} that when $\widetilde{d} = \max\{d_M, d_M^{\frac{m}2} \}$ for some $m >2$, then ($WGE_{q, \frac{1}{m-1}}$) ($q \in (1, 2)$) is a corollary of  ($ND$) and ($GUE_{\frac{1}{m - 1}}$). But the above two conclusions are based on the fact that $|\nabla_M d_M| \leq 1$, which is obviously not valid for a general quasi-distance $\widetilde{d}$.
It should be reasonable to conjecture that, in general, ($WGE_{q, \theta}$) is not a direct consequence of the conjunction of ($ND$) and ($GUE_\theta$). That is the reason we list it here as an independent assumption. Section \ref{S} contains a detailed discussion of ($GUE_{\theta}$) and ($WGE_{q, \theta}$).

\medskip

\subsection{Some closely related known results about Riesz transform} \label{SS12}

\medskip

In the past several decades, a great deal of mathematical effort has been devoted to \eqref{W11} and \eqref{lpe} on various manifolds. Here we only mention a few early works \cite{S83, L85, B85, B87, L88, A92, Al92}. Meanwhile, people try to establish \eqref{W11}
under as few assumptions as possible, and the volume doubling property ($D$) is among the most essential ones. Since ($D$) implies the weak $(1, 1)$ estimate of Hardy-Littlewood maximal operator and the Calder\'on-Zygmund decomposition of $L^1$ functions, which are convenient tools in harmonic analysis. In this regard, a seminal result has been obtained by T. Coulhon and X. T. Duong in \cite{CD99}, using singular integral theory developed in \cite{DM99}. More precisely, they established \eqref{W11} under ($D$) and ($UE_2$). And recent progress can be found in \cite{CCFR16} and \cite{LZ17}. To be more precise, \eqref{W11} has been proved under the assumption that ($D$) and ($UE_m$) in \cite{CCFR16},  also under the weaker assumption that ($ND$), ($GUE_{\theta}$) and ($WGE_{1, \theta}$) in \cite{LZ17}.

However, there is a wide variety of manifolds not satisfying the volume doubling property. For example, manifolds with both exponential volume growth and a positive bottom of the spectrum, on which we will concentrate. Compared with doubling manifolds, the main issue in this setting is the lack of an adequate singular integral theory. Nevertheless \eqref{W11} has been obtained in the following few settings: non-compact symmetric spaces (see \cite{A92}), harmonic $AN$ groups (see \cite{ADY96}), the Laplacian with drift on Euclidean spaces (see \cite{LSW16}) and on real hyperbolic spaces in the sense of  \cite{LS17}, as well as the sub-Laplacian with drift on Heisenberg groups in the context of \cite{LS20}.

\medskip

\subsection{Real hyperbolic space} \label{real hyperbolic}

\medskip

The real hyperbolic space $\H^n$ of dimension $n \geq 2$ is a typical manifold with exponential volume growth and a positive bottom of the spectrum. We hereafter adopt the upper half-space representation (see, e.g., \cite[\S 5.7]{D89}): that is  $ \H^n = \R^{n - 1} \times \R^+$ endowed with the metric $g_{\H^n} = u_n^{-2} \sum_{i = 1}^n (\d u_i)^2$. The associated Riemannian measure is $\d \mu_{\H^n}(u) = u_n^{-n} \d u$, where $\d u$ is the Lebesgue measure. In this representation, the geodesic distance $r$ between $u = (u_1, \cdots, u_n), v = (v_1,\cdots, v_n) \in \R^{n - 1} \times \R^+$  can be computed as
$$r(u, v) = \text{arccosh }
\frac{ \sum_{i = 1}^{n-1}(u_i - v_i)^2+ u_n^2+ v_n^2}{2 u_n v_n}.$$

The gradient  of $f \in C^\infty(\H^n)$ is written as
$$ \nabla_{\H^n} f = \bigg( u_n \frac{\partial f}{\partial u_1}, \cdots, u_n \frac{\partial f}{\partial u_n} \bigg),$$
and the Laplace-Beltrami operator is given by
$$ \Delta_{\H^n} f = u_n^2 \sum_{i = 1}^n \frac{\partial^2 f}{\partial u_i^2} - (n-2) u_n \frac{\partial f}{\partial u_n}. $$

In Riemannian Geometry, it is well-known that the real hyperbolic space is homogeneous, the volume of geodesic ball of radius $R$ is independent of its center, and
\begin{equation} \label{ve}
 V_{\H^n} (u, R) \, \sim \,
 \begin {cases}
 R^n  & \text{if \, $0 < R \leq 1$,} \\
  \e^{(n-1)R}   & \text{if \, $R>1$.} \\
   \end {cases}
\end{equation}
Consequently, $\H^n$ satisfies the following local volume doubling property in the sense that for any fixed $R_0 > 0 $, there exists a positive constant $D_{R_0}$, such that
$$V_{\H^n} (u, 2R) \leq D_{R_0} V_{\H^n} (u, R), \qquad \forall \, u \in \H^n, \,  R \in (0, R_0). \eqno(ND_{\loc}) $$
As for the behavior at infinity, its volume growth is at most exponential in the sense that there exist positive constants $C$ and $c$ such that
$$V_{\H^n}(u, \theta R) \leq C \, \e^{c \, \theta} \, V_{\H^n}(u, R), \qquad \forall \, u \in \H^n, \, \theta > 1 , \, R \in (0, 1].  \eqno(VE)                                                                                      $$
Obviously the properties $(ND_{\loc})$ and $(VE)$ can be defined on general (quasi-)metric measure spaces.

For the Riesz transform on $\H^n$, \eqref{lpe} for $1 < p < +\infty$ and \eqref{W11} are known results.

\medskip

\subsection{Main results} \label{main}

\medskip

We shall consider a class of product Riemannian manifolds $\M$ in this paper. $\M$ is obtained by taking the direct product of $\H^n$ and a connected complete non-compact Riemannian manifold $M$ equipped with a quasi-distance $\widetilde{d}$. Besides we always assume $M$ satisfies ($ND$), ($GUE_{\theta}$) and ($WGE_{q, \theta}$) (with some $1 \leq q \leq 2$ and $0 < \theta \leq 1$) w.r.t. $\widetilde{d}$.

It is well-known that the heat kernel $p_t^{\M}$ on $\M$ is given by (see e.g. \cite{G09})
\begin{align} \label{hk}
p_t^\M (\X, \Y) = p_t^{\H^n}(\u, \v) \,  p_t^{M}(\x, \y), \quad  \forall \, t > 0, \, \, \X = (\u, \x), \, \Y = (\v, \y) \in \H^n \times M.
\end{align}
For our purpose, a quasi-distance $d$ on $\M$ is defined as follows:
\begin{align} \label{distance}
d(\X, \Y) = \max \{ r(\u, \v), \, \widetilde{d}(\x, \y)     \}, \quad \X = (\u, \x), \ \Y = (\v, \y) \in \H^n \times M.
\end{align}
Note that $d$ differs from the geodesic distance on $\M$ considered in differential geometry.
In the sequel, for simplicity, we will omit coordinates of points in distance functions if they are self-evident. The Riemannian measure $\d\mu_\M$ on $\M$ satisfies $\d \mu_\M = \d \mu_{\H^n} \otimes \d \mu_M$. Then ($ND_{\loc}$) of $\H^n$ and ($ND$) of $M$ imply that $(\M, d, \d \mu_\M)$ satisfies ($ND_{\loc}$). As for the gradient of $f \in C^1(\M)$, we have $\nabla_{\M} f  = (\nabla_{\H^n} f, \nabla_M f )$.

The motivation for this work comes from two aspects. Firstly, recalling all the existing weak type $(1,1)$ results on manifolds with exponential volume growth, whose bottom of the spectrum is positive (see \cite{A92, ADY96, LSW16, LS17, LS20}), approaches therein all require sharp pointwise estimates of spatial derivatives of the heat kernel, which are highly non-trivial.
Therefore it is natural to ask if the use of these estimates could be avoided when establishing \eqref{W11}.

Secondly, we are interested in the Riesz transform on direct product manifolds. We have showed in \cite{LZ17} that if $M$ is the direct product manifold $M_1 \times \cdots \times M_n$, where $M_i$ ($1 \leq i \leq n$) are connected complete non-compact Riemannian manifolds satisfying ($D$) and ($UE_{m_i}$) ($m_i \geq 2$), then \eqref{W11} and \eqref{lpe} for $1 < p < 2$ remain valid on $M$. Hence it is reasonable to ask whether the corresponding results hold in the setting of direct product manifolds with exponential volume growth, such as $\H^n \times M$, where $M$ is a connected complete non-compact Riemannian manifold satisfying ($ND$), ($GUE_{\theta}$) and $(WGE_{q, \theta})$. The aim of this paper is to give an affirmative answer to this question.

Now we can state our main result as follows:

\begin{theo} \label{nt1}
Let $M$ be a connected complete non-compact Riemannian manifold satisfying $(ND)$, $(GUE_{\theta})$ and $(WGE_{q, \theta})$ $($with $1 \leq q \leq 2$, $0 < \theta \leq 1$$)$ w.r.t. the quasi-distance $\widetilde{d}$. Moreover, suppose that for any given $R_0 > 0$, the ball $\widetilde{B}(x, R_0)$ equipped with the induced quasi-distance $\widetilde{d}$ and measure $\d \mu_M$ satisfies $(ND)$, with a doubling constant independent of its center. Then, the Riesz transform on $\H^n \times M$ is of weak type $(1, 1)$ $($so $L^p$ bounded for $1 < p \leq 2$$)$.
\end{theo}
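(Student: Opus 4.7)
The plan is to split the Riesz transform via the subordination formula
$\RR=\nabla_{\M}(-\Delta_{\M})^{-\frac{1}{2}}=\frac{1}{\sqrt{\pi}}\int_0^{\infty}\nabla_{\M}\,\e^{t\Delta_{\M}}\,\frac{\d t}{\sqrt{t}}$
at $t=1$, writing $\RR=\RR^{0}+\RR^{\infty}$, and to treat the two pieces by quite different techniques. Throughout I exploit the product formula $p_t^{\M}(\X,\Y)=p_t^{\H^n}(u,v)\,p_t^{M}(x,y)$ together with the decomposition $\nabla_{\M}=(\nabla_{\H^n},\nabla_M)$, handling each component separately.

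For $\RR^{0}$, the small-time piece, I will first verify that $\M$ endowed with the quasi-distance $d=\max\{r,\widetilde d\}$ satisfies the local doubling property $(ND_{\loc})$, that $p_t^{\M}$ enjoys a local Gaussian-type upper bound with respect to $d$ for $t\le 1$, and that a weighted gradient estimate of type $(WGE_{q,\theta})$ holds for $\M$ at small times; these follow from the assumed estimates on $M$, the corresponding local estimates on $\H^n$, and the fact that $|\nabla_{\H^n} r|\le 1$. The hypothesis in the theorem that $\widetilde B(x,R_0)$ is doubling with a uniform constant is exactly what is needed to make the doubling constant on $\M$-balls of moderate radius uniform in the center. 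The localized Calder\'on-Zygmund framework developed in \cite{CD99, CCFR16, LZ17} then applies in this setting to yield the weak $(1,1)$ bound of $\RR^{0}$.

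For $\RR^{\infty}$, the essential new ingredient is the positive spectral gap $(n-1)^2/4$ of $\Delta_{\H^n}$ on $L^2(\H^n)$. The $\nabla_{\H^n}$-component of $\RR^{\infty}$ reduces, via Fubini and the $L^1$-contractivity of $\e^{t\Delta_M}$, to the convergence of $\int_1^{\infty}\|\nabla_{\H^n}p_t^{\H^n}(u,\cdot)\|_{L^1(\H^n)}\,\frac{\d t}{\sqrt t}$, which is finite thanks to the exponential factor $\e^{-ct}$ coming from the spectral gap; this yields $L^1\to L^1$ boundedness of this component. The $\nabla_M$-component is substantially more delicate because a direct Fubini estimate would require $\int_1^{\infty}\|\nabla_M p_t^M(x,\cdot)\|_{L^1(M)}\,\frac{\d t}{\sqrt t}$, which typically diverges logarithmically (the slice integral $\int_{\H^n}p_t^{\H^n}(u,v)\,\d\mu_{\H^n}(v)=1$ offers no decay in $t$). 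To handle it I combine a vector-valued $L^2(\H^n;L^2(M))$-bound, which converges at infinity because $\|\e^{t\Delta_{\H^n}}\|_{L^2\to L^2}=\e^{-(n-1)^2 t/4}$ and $\|\nabla_M\e^{t\Delta_M}\|_{L^2\to L^2}\lesssim t^{-1/2}$, with a separate off-diagonal kernel estimate on the region $d(\X,\Y)\gtrsim 1$ that exploits $(WGE_{q,\theta})$; weak $(1,1)$ then follows by a Calder\'on-Zygmund-type splitting.

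The hardest step is clearly the $\nabla_M$-direction of $\RR^{\infty}$: here the spectral gap on the $\H^n$-factor must be genuinely exchanged for control of the $M$-gradient, without any recourse to sharp pointwise bounds on $\nabla_{\H^n}p_t^{\H^n}$ or $\nabla_M p_t^M$. Carrying out this exchange through vector-valued $L^2$-methods and a careful treatment of the off-diagonal part, rather than through explicit pointwise derivative estimates of the heat kernel as in \cite{A92, ADY96, LSW16, LS17, LS20}, is exactly what makes the approach robust enough to cover the abstract product setting of the theorem.
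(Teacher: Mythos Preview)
Your treatment of the $\nabla_{\H^n}$-component of $\RR^{\infty}$ contains a genuine error. You assert that $\int_1^{\infty}\|\nabla_{\H^n}p_t^{\H^n}(u,\cdot)\|_{L^1(\H^n)}\,\frac{\d t}{\sqrt t}<\infty$ ``thanks to the exponential factor $\e^{-ct}$ coming from the spectral gap''. But the spectral gap of $-\Delta_{\H^n}$ is an $L^2$ phenomenon and yields no decay in $L^1\to L^1$ norm. In fact, using \eqref{gr of hk} and the volume element $\sinh^{n-1}(r)\,\d r$, one computes that for large $t$ the mass of $|\nabla_{\H^n}p_t^{\H^n}|$ concentrates near $r\sim(n-1)t$ (ballistic spreading), and the exponent $-\frac{r^2}{4t}-\frac{(n-1)r}{2}-\frac{(n-1)^2t}{4}+(n-1)r=-\frac{(r-(n-1)t)^2}{4t}$ vanishes there; Laplace's method then gives $\|\nabla_{\H^n}p_t^{\H^n}(u,\cdot)\|_{L^1(\H^n)}\sim C>0$ as $t\to\infty$. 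Hence the integral diverges and this piece is \emph{not} $L^1\to L^1$ bounded. Your $\nabla_M$-piece suffers from the same structural problem: the off-diagonal tail on $\{d\gtrsim 1\}$ is not $L^1$-integrable on the non-doubling space $\M$, so a ``Calder\'on--Zygmund-type splitting'' cannot be carried out without further input. The temporal split at $t=1$ also does not give the spatial localization required to run the Calder\'on--Zygmund argument of \cite{CD99, CCFR16, LZ17} for $\RR^{0}$ on a non-doubling manifold.

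The paper proceeds differently: it localizes \emph{spatially} via a partition of unity and writes $\RR=\RR_{\loc}+\RR_{\infty}$ with $\RR_{\infty}$ the operator with kernel $|R(\X,\Y)|\chi_{\{d>1\}}$. For $\RR_{\infty}$ the key tool is Str\"omberg's lemma (Lemma~\ref{weak type on hn}): the kernel $\e^{-(n-1)r}$ defines a \emph{weak} $(1,1)$ operator on $\H^n$---precisely not $L^1$-bounded, which is the obstruction you encountered. After inserting a factor $\e^{\varepsilon r/t}$ and exploiting the explicit pointwise bounds \eqref{hk of hn}, \eqref{gr of hk} on $p_t^{\H^n}$ and $\nabla_{\H^n}p_t^{\H^n}$, each piece is dominated by a tensor product of a weak-$(1,1)$ operator on $\H^n$ and an $L^1$-bounded operator on $M$ (Lemma~\ref{weak type 11}); only the integrated estimate $(WGE_{1,\theta})$ is used on the $M$-side. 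Thus the paper does \emph{not} avoid sharp pointwise estimates on the $\H^n$-factor---they are essential---but it does avoid any pointwise control of $\nabla_M p_t^M$, which is the robustness gain relative to \cite{A92, ADY96, LSW16, LS17, LS20}.
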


\begin{remark} \label{r0}
The condition that $( \widetilde{B}(x, R_0), \widetilde{d}, \d \mu_M )$ satisfies $(ND)$ uniformly in $x \in M$ is a technical property that is only used in Section \ref{S2} for a localization argument. When $\widetilde{d} = d_M$ and $(ND_{\loc})$ holds, it is automatically fulfilled, as shown in \cite[Lemma 4.1]{ACDH03} (cf. also \cite[Lemme 4.3.1]{Li99}). In addition, it can be observed that this property remains stable under products, provided that direct product manifolds are equipped with a quasi-distance of the type \eqref{distance}. Therefore, under the assumptions of Theorem \ref{nt1}, $\H^n \times M$ shares the same property.
\end{remark}

The techniques developed here also apply to the study of the heat maximal operator on $\M$, which is defined by
\begin{align*}
\HH (f)(\X) := \sup_{t>0} |\e^{t \Delta_\M}f(\X)|, \quad \forall \, \X \in \M, \, f \in L^p (\M), \, 1 \leq p \leq + \infty.
\end{align*}
Since the heat semigroup $( \e^{t \Delta_\M} )_{t > 0}$ is a diffusion semigroup symmetric w.r.t. $\mu_\M$, the $L^p$ boundedness ($1 < p \le +\infty$) of $\HH$ can be found in \cite[Section 3.3]{S70(2)} or \cite[Lemma 1.6.2]{BGL14}. In Section \ref{S4}, we establish the weak type $(1, 1)$ property of $\HH$ as follows:

\begin{theo} \label{nt2}
Let $M$ be a connected complete non-compact Riemannian manifolds satisfying $(ND)$, $(GUE_{\theta})$ w.r.t. the quasi-distance $\widetilde{d}$ $($with $0 < \theta \leq 1$$)$. Then, the heat maximal operator on $\H^n \times M$ is  of weak type $(1, 1)$ $($so $L^p$ bounded for $1 < p \leq + \infty$$)$.
\end{theo}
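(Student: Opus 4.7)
The plan is to split the time parameter at $t=1$: write $\HH f = \HH_0 f + \HH_\infty f$ with $\HH_0 f(\X) = \sup_{0 < t \leq 1} |\e^{t\Delta_\M} f(\X)|$ and $\HH_\infty f(\X) = \sup_{t > 1} |\e^{t\Delta_\M} f(\X)|$, and prove weak type $(1,1)$ for each piece separately.

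For $\HH_0$ I would exploit the local doubling $(ND_{\loc})$ on $(\M, d, \mu_\M)$, which is inherited from the factors (cf. Remark~\ref{r0}). Using $V_\M(\X,\sqrt{t}) = V_{\H^n}(\u,\sqrt{t})\,\widetilde V_M(\x,\sqrt{t})$ and combining the Gaussian bound $(UE_2)$ on $\H^n$ for $0 < t \leq 1$ with $(GUE_{\theta})$ on $M$, one obtains for $0 < t \leq 1$
\[
p_t^\M(\X,\Y) \lesssim \frac{1}{V_\M(\X,\sqrt{t})} \exp\!\Big(-c\,\frac{r(\u,\v)^2}{t} - c\Big(\frac{\widetilde d(\x,\y)^2}{t}\Big)^{\theta}\Big).
\]
A dyadic annular decomposition in $d(\X,\Y)$ yields a pointwise bound of $\HH_0 f$ by the Hardy--Littlewood maximal function over balls of radius at most some fixed $R_0$; on scales $\gtrsim 1$ the (sub-)Gaussian decay overwhelms the at-most-exponential volume growth $(VE)$ of $\H^n$. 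Weak $(1,1)$ of this local Hardy--Littlewood maximal operator follows from $(ND_{\loc})$ via a standard Vitali covering argument.

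For $\HH_\infty$ the essential tool is the spectral gap of $\H^n$, $\lambda_0 = (n-1)^2/4 > 0$, combined with the tensor-product factorisation $p_t^\M(\X,\Y) = p_t^{\H^n}(\u,\v)\, p_t^M(\x,\y)$. For $t \geq 1$ one has the explicit estimate $p_t^{\H^n}(\u,\v) \lesssim t^{(n-6)/2}\e^{-\lambda_0 t}(1+r)^{(n-1)/2}\e^{-(n-1)r/2} \e^{-r^2/(4t)}$, while $(GUE_{\theta})$ gives $p_t^M(\x,\y) \leq c^{-1}/\widetilde V_M(\x,\sqrt{t})$. The naive composition bound $\HH_\infty f(\u,\x) \leq \HH^{\H^n}(\HH^M f(\cdot,\x))(\u)$ is too lossy: since weak $(1,1)$ of $\HH^M$ does not give an $L^1$ bound on $\HH^M g$, Fubini cannot close the argument. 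Instead I would perform a Calder\'on--Zygmund decomposition of $f$ on $\M$ at level $\lambda$ using the local doubling, writing $f = g + \sum_i b_i$ with each $b_i$ supported on $B(\X_i, r_i)$ with $r_i \leq R_0$ and $\int b_i = 0$, treat the good part by the $L^2$ boundedness of $\HH$ (Stein's maximal theorem for symmetric diffusion semigroups, cf.~\cite[Section 3.3]{S70(2)}), and reduce the bad part, via the usual enlarged-ball argument, to establishing
\[
\int_{\M \setminus B_i^*} \sup_{t > 1,\, \Y \in B(\X_i,r_i)} |p_t^\M(\X,\Y) - p_t^\M(\X,\X_i)|\, \d\mu_\M(\X) \lesssim 1.
\]

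The hard part will be this last integral estimate without invoking sharp pointwise estimates on $\nabla_{\H^n} p_t^{\H^n}$, which is precisely the feature distinguishing this approach from \cite{A92, LSW16, LS17}. Using the tensor factorisation
$p_t^\M(\X,\Y) - p_t^\M(\X,\X_i) = p_t^{\H^n}(\u,\v)[p_t^M(\x,\y) - p_t^M(\x,\x_i)] + p_t^M(\x,\x_i)[p_t^{\H^n}(\u,\v) - p_t^{\H^n}(\u,\u_i)]$,
the $M$-difference in the first term should be handled by standard doubling/Gaussian arguments (noting that the $\H^n$ factor integrates to $1$ in $\u$), while the $\H^n$-difference in the second term should be controlled by exploiting the exponential factor $\e^{-\lambda_0 t}$ together with the uniform smallness $r(\u_i, \v)\leq r_i \leq R_0$, so that the variation of $p_t^{\H^n}$ in its second slot becomes integrable in $\u$ without any pointwise gradient bound.
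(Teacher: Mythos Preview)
Your treatment of the small-time piece $\HH_0$ is fine and essentially matches the paper's handling of $\HH_{\loc}$. The difficulty is with your large-time piece $\HH_\infty$, and the gap lies in the H\"ormander-type estimate you reduce to. Write the tensor splitting as you do. For the $M$-difference term $p_t^{\H^n}(\u,\v)\,[p_t^M(\x,\y)-p_t^M(\x,\x_i)]$, the claim that ``the $\H^n$ factor integrates to~$1$'' does not help once the supremum over $t>1$ sits \emph{inside} the $\X$-integral: you cannot exchange $\sup_t$ with $\int_{\H^n}\d\mu_{\H^n}(\u)$. If instead you decouple by $\sup_t(A_tB_t)\le(\sup_tA_t)(\sup_tB_t)$, you must control $\int_M\sup_{t>1}|p_t^M(\x,\y)-p_t^M(\x,\x_i)|\,\d\mu_M(\x)$, which is a pointwise H\"older/gradient regularity statement for $p_t^M$ that is \emph{not} available under $(ND)$ and $(GUE_\theta)$ alone. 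For the $\H^n$-difference term $p_t^M(\x,\x_i)\,[p_t^{\H^n}(\u,\v)-p_t^{\H^n}(\u,\u_i)]$ the same decoupling forces $\int_M\sup_{t>1}p_t^M(\x,\x_i)\,\d\mu_M(\x)$, and this integral is in general \emph{infinite}: under $(GUE_\theta)$ one only gets $\sup_{t>1}p_t^M(\x,\y)\lesssim \widetilde V_M(\y,\max\{1,\widetilde d\})^{-1}$, whose annular sum diverges. The spectral gap $\e^{-\lambda_0 t}$ lives on the $\H^n$ side and, once the sup has been taken, cannot be transferred to tame the $M$-integral. So neither half of the splitting closes without additional regularity you have not assumed.

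The paper avoids this obstruction by a different decomposition. It splits by \emph{distance} rather than time, setting
\[
\HH_\infty f(\X):=\int_{\{\Y:\,d>1\}}\sup_{t>0}p_t^\M(\X,\Y)\,|f(\Y)|\,\d\mu_\M(\Y),
\]
i.e.\ the sup is pulled inside the integral, turning $\HH_\infty$ into a positive integral operator with kernel $\sup_{t>0}p_t^\M$. No cancellation or Calder\'on--Zygmund decomposition is used. Instead one decouples the $t$-dependence pointwise: on $\{\widetilde d>1,\,r\le1\}$ write $\sup_t p_t^\M\le(\sup_t\sqrt{t}\,p_t^{\H^n})(\sup_t t^{-1/2}p_t^M)$ and check each factor is $L^1$-integrable in its own variable; on $\{r>1\}$ insert the factor $\e^{\varepsilon r/t}$, so that $\sup_t\sqrt{t}\,p_t^{\H^n}\e^{\varepsilon r/t}\lesssim\e^{-(n-1)r}$ (Str\"omberg's kernel, weak $(1,1)$ on $\H^n$) while $\sup_t t^{-1/2}p_t^M\e^{-\varepsilon r/t}$ is uniformly $L^1$ in $\x$. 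Lemma~\ref{weak type 11} then assembles the product. This route uses only the heat kernel bounds \eqref{hk of hn} and $(GUE_\theta)$---no gradient or H\"older estimates on either factor---which is precisely what your scheme cannot achieve.
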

For more results on the weak type $(1, 1)$ boundedness of $\HH$ in the context of manifolds with exponential volume growth, we refer the reader to \cite{A92, ADY96, CGGM91, LSW16, LS17, LS20}.

\medskip

Section \ref{S6} extends the above results to a large class of direct product manifolds with exponential volume growth. As a by-product, we generalize the main results in \cite{LSW16} and provide a much simpler proof.

\medskip

\renewcommand{\theequation}{\thesection.\arabic{equation}}
\section{Preliminaries}
\setcounter{equation}{0} \label{S}

\medskip

\subsection{Some basic facts about ($GUE_{\theta}$) and ($WGE_{q, \theta}$)}

To illustrate ($GUE_{\theta}$) and ($WGE_{q, \theta}$), recall first that if $M$ meets ($D$) and ($UE_2$), then ($GUE_1$) and ($WGE_{2, 1}$) are valid w.r.t. the geodesic distance (cf. e.g. \cite{CD99} or \cite{G95}).

Next, suppose that $M$ satisfies the volume doubling property and the sub-Gaussian heat kernel upper estimates with the exponent $m > 2$. Let
\begin{align*}
\widetilde{d}(x, y) = \max\{d_M(x, y), \, d_M(x, y)^{\frac{m}{2}}\}, \quad \forall \, x, y \in M.
\end{align*}
It is easy to check that $\widetilde{d}$ defines a quasi-distance on $M$ and that $(M, \widetilde{d}, \d \mu_M)$ satisfies ($ND$), ($GUE_{\frac{1}{m - 1}}$). It follows from \cite[Lemma 2.2]{CCFR16} that ($WGE_{q, \frac{1}{m - 1}}$) is available for all $1 < q < 2$ with regard to $\widetilde{d}$.

Fix a quasi-distance $\widetilde{d}$ on $M$. Notice that given $0 < \theta' < \theta$,  $s^{\theta'} \le 1 + s^\theta$ holds for all $s \geq 0$. Hence
\begin{align} \label{GUE}
\mbox{($GUE_{\theta}$)} \Longrightarrow \mbox{($GUE_{\theta'}$)}, \quad \mbox{($WGE_{q, \theta}$)} \Longrightarrow \mbox{($WGE_{q, \theta'}$)}.
\end{align}

Assume now that ($ND$) and ($GUE_{\theta}$) are valid. The standard method of decomposition in annuli implies that
\begin{align} \label{be1}
\int_M \exp{\Bigg( - \gamma  \, \bigg(\frac{\widetilde{d}(x, y)^2}{t}\bigg)^{\theta}\Bigg)} \, \d\mu_M(x) &= \int_{\widetilde{B}(y, \sqrt{t})} + \sum_{i = 0}^{+\infty} \int_{2^i \sqrt{t} \leq \widetilde{d}(y, \, x) < 2^{i + 1} \sqrt{t}} \nonumber\\
&\leq C(\gamma, \theta) \, \widetilde{V}_M (y, \sqrt{t} ), \quad \forall \,  t > 0, \,   y \in M,
\end{align}
where we have used \eqref{nd} in the last line. Combining this with H\"older inequality,
we obtain the following relation
\begin{align} \label{GUE2}
\mbox{($WGE_{q, \theta}$)} \Longrightarrow \mbox{($WGE_{q', \theta}$)}, \quad \forall \,  1 \leq q' < q.
\end{align}

To finish this section, we point out an invariance of ($GUE_\theta$) and ($WGE_{q, \theta}$) under direct product in the following sense:

\begin{propo} \label{prop1}
Suppose that  $M_i$ $(i  = 1, 2)$  are connected complete non-compact Riemannian manifolds satisfying $(ND)$, $(GUE_{\theta_i})$ and $(WGE_{q_i, \theta_i})$ $($with $1 \leq q_i \leq 2$, $0 < \theta_i \leq 1$$)$ w.r.t. the quasi-distance $\widetilde{d_i}$. Let $\theta = \min\{\theta_1, \theta_2\}$, $q = \min\{q_1, q_2\}$ and
\begin{align*}
\widetilde{d}(x, y) = \max_{i = 1, 2}\{\widetilde{d_i}(x_i, y_i)\}, \quad \forall \, x = (x_1, x_2), \, y = (y_1, y_2) \in M_1 \times M_2.
\end{align*}
Then on the direct product manifold $M = M_1 \times M_2$, we have $(ND)$, $(GUE_\theta)$ and $(WGE_{q, \theta})$ w.r.t. the quasi-distance $\widetilde{d}$.
\end{propo}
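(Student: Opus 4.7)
The plan is to verify the three properties in turn. The key observation is that the definition $\widetilde{d}(x,y) = \max\{\widetilde{d_1}(x_1,y_1), \widetilde{d_2}(x_2,y_2)\}$ makes the balls factor as $\widetilde{B}(x,R) = \widetilde{B_1}(x_1,R) \times \widetilde{B_2}(x_2,R)$, whence $\widetilde{V}_M(x,R) = \widetilde{V}_{M_1}(x_1,R)\,\widetilde{V}_{M_2}(x_2,R)$. Thus $(ND)$ on $M$ is an immediate consequence of $(ND)$ on each factor. Using \eqref{GUE} and \eqref{GUE2}, I may further reduce to the case $\theta_1 = \theta_2 = \theta$ and $q_1 = q_2 = q$, so that each $M_i$ satisfies $(GUE_\theta)$ and $(WGE_{q,\theta})$ w.r.t.\ $\widetilde{d_i}$. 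With these reductions the product structure becomes transparent.

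For $(GUE_\theta)$, the heat kernel factorises as $p_t^M(x,y) = p_t^{M_1}(x_1,y_1)\, p_t^{M_2}(x_2,y_2)$, and multiplying the two pointwise upper bounds produces the factor $\exp(-c(\widetilde{d_1}^2/t)^\theta - c(\widetilde{d_2}^2/t)^\theta)$. Since $\widetilde{d}^2 = \max\{\widetilde{d_1}^2, \widetilde{d_2}^2\} \leq \widetilde{d_1}^2 + \widetilde{d_2}^2$ and $s \mapsto s^\theta$ is subadditive for $\theta \in (0,1]$, one has $(\widetilde{d}^2/t)^\theta \leq (\widetilde{d_1}^2/t)^\theta + (\widetilde{d_2}^2/t)^\theta$, which combined with the volume factorisation yields $(GUE_\theta)$ on $M$.

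For $(WGE_{q,\theta})$, since $\nabla_M p_t^M = (\nabla_{M_1} p_t^{M_1}\cdot p_t^{M_2},\, p_t^{M_1}\cdot\nabla_{M_2} p_t^{M_2})$, the subadditivity $(a+b)^{q/2} \leq a^{q/2} + b^{q/2}$ (valid since $q \leq 2$) gives
$$|\nabla_M p_t^M|^q \leq |\nabla_{M_1} p_t^{M_1}|^q (p_t^{M_2})^q + (p_t^{M_1})^q |\nabla_{M_2} p_t^{M_2}|^q.$$
Combined with $\exp(c(\widetilde{d}^2/t)^\theta) \leq \exp(c(\widetilde{d_1}^2/t)^\theta)\exp(c(\widetilde{d_2}^2/t)^\theta)$ and Fubini, the target integral over $M$ is dominated by a sum of two products, each being $(WGE_{q,\theta})$ applied to one factor times an auxiliary integral $\int_{M_i}(p_t^{M_i}(\cdot,y_i))^q \exp(c(\widetilde{d_i}^2/t)^\theta) \,\d\mu_{M_i}$ on the other. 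Once each such auxiliary integral is shown to be bounded by $C\,\widetilde{V}_{M_i}(y_i,\sqrt{t})^{-(q-1)}$, the identity $\widetilde{V}_M(y,\sqrt{t}) = \widetilde{V}_{M_1}(y_1,\sqrt{t})\widetilde{V}_{M_2}(y_2,\sqrt{t})$ delivers the required bound.

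The main technical step is precisely the auxiliary estimate. Applying $(GUE_\theta)$ pointwise gives $(p_t^{M_i})^q \lesssim \widetilde{V}_{M_i}(x_i,\sqrt{t})^{-q}\exp(-cq(\widetilde{d_i}^2/t)^\theta)$, but the volume factor needs to be replaced by $\widetilde{V}_{M_i}(y_i,\sqrt{t})^{-q}$. This swap is effected through the polynomial volume comparison \eqref{nd}, which introduces a factor $(1+\widetilde{d_i}/\sqrt{t})^{q\upsilon}$; this polynomial factor is absorbed into a small fraction of the stretched-exponential decay. With $c$ chosen small enough the remaining exponent is still strictly negative, and the annular decomposition \eqref{be1} yields $\int_{M_i} \exp(-c'(\widetilde{d_i}^2/t)^\theta)\,\d\mu_{M_i} \lesssim \widetilde{V}_{M_i}(y_i,\sqrt{t})$, producing exactly the claimed bound. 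Putting together the estimates for the two factors completes the proof of $(WGE_{q,\theta})$ on $M$.
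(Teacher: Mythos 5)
Your argument is correct and follows exactly the route the paper indicates for its (omitted) proof: factorisation of balls, volumes, heat kernel and gradient on the product, together with the annular-decomposition estimate \eqref{be1} to handle the auxiliary integral, with the constant $c$ taken small enough to absorb the volume-comparison factor from \eqref{nd}. (A minor shortcut: the swap $\widetilde{V}_{M_i}(x_i,\sqrt{t})\to\widetilde{V}_{M_i}(y_i,\sqrt{t})$ can also be obtained directly from the symmetry of the heat kernel, but your use of \eqref{nd} plus absorption is equally valid.)
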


\begin{proof}
This proof is an easy task. It suffices to use properties of product manifolds and \eqref{be1}. So we omit it.
\end{proof}

\begin{remark}
The result remains valid if one of $M_i$ is a compact Riemannian manifold or a Lie group with polynomial growth.
\end{remark}

\medskip

\subsection{Some further properties of $\H^n$}

\medskip

In this short section, we collect basic properties of $\H^n$ which will be used throughout this work. We first present some estimates of the heat kernel and its gradient, which can be found in \cite{D89, DM88}:

The heat kernel $p_t^{\H^n}(u, v)$ depends only on $t$ and $r = r(u, v)$.
More precisely, it is known that for all $r \geq 0$ and $t > 0$,
\begin{align} \label{hk of hn}
p_t^{\H^n}(\u, \v) = p_t^{\H^n}(r) \sim t^{-\frac{n}{2}} \left( 1 + r+t \right) ^{\frac{n-3}{2}} \left( 1+r \right) \exp{ \bigg( -\frac{r^2}{4t}-\frac{n-1}{2}r - \frac{(n-1)^2}{4}t\bigg) }.
\end{align}
In particular, combining this with \eqref{ve}, $p_t^{\H^n}$ has the following small-time upper bound:
\begin{align} \label{small time gaussian}
p_t^{\H^n}(\u , \u)  \leq \frac{C}{V_{\H^n}(\u, \sqrt{t})}, \quad \forall \, \u \in \H^n, \, \, 0< t \leq 1,
\end{align}
and the following local estimate: given $r_0 > 0$, for all $\u, \v \in \H^n$ with $r = r(\u, \v) \leq r_0$,
\begin{align} \label{local estimate}
p_t^{\H^n}(\u, \v) \lesssim_{r_0} t^{-\frac{n}{2}} \exp \left(-\frac{r^2}{Ct}\right) , \quad \forall \, t > 0.
\end{align}

As for the spatial derivative of $p_t^{\H^n}$, it is known that for $\u, \v \in \H^n$ and $t > 0$,
\begin{align} \label{gr of hk}
\big|\nabla_{\H^n} p_t^{\H^n} (u, v) \big| \sim t^{-\frac{n+2}{2}} \left( 1+r+t \right) ^{\frac{n-1}{2}} r \exp{ \left( -\frac{r^2}{4t}-\frac{n-1}{2}r - \frac{(n-1)^2}{4}t\right) }.
\end{align}

We will make use of the following lemma, which asserts that on $\H^n$, a specific decay of the kernel toward infinity implies the weak type $(1, 1)$ property of the associated integral operator. For its proof, see \cite{S81}.

\begin{lem} \label{weak type on hn}
The integral operator defined on $\H^n$ with the kernel
$$S(u, v) = \e^{-(n - 1) \, r}$$
is of weak type $(1, 1)$.
\end{lem}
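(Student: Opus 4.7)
The plan is to follow the strategy of \cite{S81}, tailored to the sharp exponential balance on $\H^n$. First, I would split the kernel at unit scale: $S = S_{\mathrm{loc}} + S_\infty$ with $S_{\mathrm{loc}}(u,v) = S(u,v)\mathbf{1}_{r(u,v) \le 1}$. Since $V_{\H^n}(u,1)$ is independent of $u$ by homogeneity, both $\int S_{\mathrm{loc}}(u,v)\, \d\mu_{\H^n}(v)$ and $\int S_{\mathrm{loc}}(u,v)\, \d\mu_{\H^n}(u)$ are bounded by a universal constant, so Schur's test gives $T_{\mathrm{loc}}: L^1 \to L^1$, which implies weak type $(1,1)$ for the local part.

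For the long-range operator $T_\infty$, I would apply the layer-cake identity $e^{-(n-1)r(u,v)} = (n-1)\int_{r(u,v)}^\infty e^{-(n-1)s}\, \d s$ to write, for $f \ge 0$,
\[
T_\infty f(u) \, \le \, (n-1)\int_1^\infty e^{-(n-1)s}\, F(u,s)\, \d s, \qquad F(u,s) := \int_{B(u,s)} f\, \d\mu_{\H^n}.
\]
Since $F(u,s) \le \|f\|_1$, this already yields $\|T_\infty f\|_\infty \lesssim \|f\|_1$, so $\{T_\infty f > \lambda\}$ is empty once $\lambda$ exceeds a fixed multiple of $\|f\|_1$. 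For smaller $\lambda$, I would set $R := (n-1)^{-1}\log(\|f\|_1/\lambda)$; the contribution of $s > R$ to the integral is bounded by $e^{-(n-1)R}\|f\|_1/(n-1) \lesssim \lambda$ and can be discarded.

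It then remains to control $\mu_{\H^n}\{u : \int_1^R e^{-(n-1)s} F(u,s)\, \d s > c\lambda\}$. Using $F(u,s) \le V_{\H^n}(u,s)\, \mathcal{M}f(u)$, where $\mathcal{M}$ is the uncentered Hardy--Littlewood maximal operator on $\H^n$, together with the sharp matching $e^{-(n-1)s}V_{\H^n}(u,s) \sim 1$ for $s \ge 1$ coming from \eqref{ve}, I would obtain the pointwise estimate $\int_1^R e^{-(n-1)s} F(u,s)\, \d s \lesssim R\, \mathcal{M}f(u)$. Str\"omberg's weak-type $(1,1)$ bound for $\mathcal{M}$ on $\H^n$ from \cite{S81} (valid at all scales despite the failure of global doubling) would then give $\mu_{\H^n}\{T_\infty f > \lambda\} \lesssim R\|f\|_1/\lambda \sim \log(\|f\|_1/\lambda)\,\|f\|_1/\lambda$.

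The main obstacle is the spurious logarithmic factor in this crude estimate: the kernel $S$ is not of standard Calder\'on--Zygmund type -- it is not globally integrable and has no oscillatory cancellation, so $T$ fails to be $L^1$-bounded and direct Chebyshev estimates on $\int T_\infty f$ diverge. Removing the logarithm to obtain the sharp weak $(1,1)$ bound requires the finer geometric covering lemma of \cite{S81}, which exploits the exponential volume growth of $\H^n$ more precisely than a single application of the Hardy--Littlewood maximal estimate and provides the correct endpoint constant.
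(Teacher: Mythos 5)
The first thing to note is that the paper does not prove this lemma at all: its ``proof'' is the single sentence ``For its proof, see \cite{S81}.'' So to the extent that your argument ultimately rests on Str\"omberg's result, you end up in the same logical position as the paper, and your preliminary reductions (the Schur-test treatment of $S\chi_{\{r\le 1\}}$, the bound $\|T_\infty f\|_\infty \lesssim \|f\|_1$, the truncation at $R\sim (n-1)^{-1}\log(\|f\|_1/\lambda)$) are all correct as far as they go.

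However, as a self-contained proof the proposal has a genuine gap at exactly the point where the lemma is non-trivial. Your argument only yields $\mu_{\H^n}\{T_\infty f>\lambda\}\lesssim \log(\|f\|_1/\lambda)\,\|f\|_1/\lambda$, and the assertion that the logarithm can be removed by invoking ``the finer geometric covering lemma of \cite{S81}'' is not an argument; moreover it points in a misleading direction. Once you dominate $F(u,s)$ by $V_{\H^n}(u,s)\,\mathcal{M}f(u)$ you have decoupled the scales, and each of the $\sim R$ unit ranges of $s$ contributes comparably, so the factor $R$ is intrinsic to this route: no refinement of the weak $(1,1)$ bound for $\mathcal{M}$, applied after the fact, can recover the endpoint (indeed the analogous kernel $\sim 1/V(x,d(x,y))$ on $\mathbb{R}^n$ fails to be weak $(1,1)$, so the conclusion genuinely uses more than ``maximal function plus exponential volume''). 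Str\"omberg's proof works with the operator itself rather than through $\mathcal{M}$: in the upper half-space model one has, for $r\ge 1$, $\e^{-(n-1)r}\sim \bigl(u_n v_n/(|u'-v'|^2+u_n^2+v_n^2)\bigr)^{n-1}$, and the superlevel set is controlled by a bespoke selection/covering argument exploiting that the mass of $f$ seen at distance $s$ is spread over volume $\approx \e^{(n-1)s}$, so that the contributions of different scales cannot pile up on a large set. Either cite \cite{S81} for the full statement, as the paper does, and drop the intermediate reduction (which then proves nothing extra), or supply that selection argument; as written, the crucial step is missing.
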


\medskip

\subsection{Weak type $(1, 1)$ of sub-linear operators on product measure spaces}

Now we present a simple criterion of weak type $(1, 1)$, which plays a role in our proof.

\begin{lem} \label{weak type 11}
Let $(X ,\mu)$ and $(Y, \nu)$ be measure spaces. Let $\A$ and $\B$ be sub-linear operators defined on $L^1(X, \mu)$ and $L^1(Y, \nu)$ respectively, which are of the forms
$$ \A(f_1)(x_1) = \int_{X}   a(x_1, y_1) |f_1(y_1)| \, \d \mu(y_1) \quad \textrm{and}  \quad \B(f_2)(x_2) = \int_{Y}   b(x_2, y_2) |f_2(y_2)| \, \d \nu(y_2),                                  $$
where $a$ and $b$ are non-negative measurable functions. Suppose that $\| \A \|_{L^1 \to L^{1, \infty}} \leq C$, i.e. it holds that
$$ \mu(\{  x_1 \in X ; |\A(f_1)(x_1)|  > \lambda \}) \leq C \, \frac{\| f_1 \|_1}{\lambda}, \quad \forall \lambda > 0, \ f_1 \in L^1(X, \mu). $$
Moreover, assume that there exists a constant $M > 0$ such that
$$ \sup_{y_2 \in Y} \int_{Y} b(x_2, y_2) \, \d \nu(x_2) \leq M. $$
Then, for the following tensor product type operator $\T$, defined on $L^1(X \times Y, \mu \otimes \nu)$,
$$ \T(f)(x_1, x_2) =  \iint_{X \times Y} a(x_1, y_1) \, b(x_2, y_2) \, |f(y_1, y_2)|  \, \d \mu(y_1) \d \nu(y_2),$$
we have $\| \T \|_{L^1 \to L^{1, \infty}} \leq C \, M$.
\end{lem}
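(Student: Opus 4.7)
The plan is to prove the estimate by slicing in the second variable and applying the weak type $(1,1)$ bound for $\A$ fiberwise, then integrating out using Fubini/Tonelli. Since all kernels and $|f|$ are non-negative, measurability issues and interchange of integrals will be essentially free.

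More precisely, first I would fix $x_2 \in Y$ and introduce the auxiliary non-negative function
\[
g_{x_2}(y_1) \, := \, \int_{Y} b(x_2, y_2) \, |f(y_1, y_2)| \, \d\nu(y_2), \qquad y_1 \in X.
\]
Then by definition $\T(f)(x_1, x_2) = \int_X a(x_1, y_1) \, g_{x_2}(y_1) \, \d\mu(y_1) = \A(g_{x_2})(x_1)$, so by the hypothesized weak type $(1,1)$ bound on $\A$,
\[
\mu\bigl(\{ x_1 \in X; \, \T(f)(x_1, x_2) > \lambda \}\bigr) \, \leq \, \frac{C \, \| g_{x_2} \|_{L^1(X, \mu)}}{\lambda}, \qquad \forall \, \lambda > 0.
\]

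Next I would integrate this inequality over $x_2 \in Y$. Writing $E_\lambda = \{(x_1, x_2) \in X \times Y; \, \T(f)(x_1, x_2) > \lambda \}$ and applying Tonelli's theorem to the measurable slices, one gets
\[
(\mu \otimes \nu)(E_\lambda) \, = \, \int_Y \mu(E_\lambda^{x_2}) \, \d\nu(x_2) \, \leq \, \frac{C}{\lambda} \int_Y \| g_{x_2} \|_{L^1(X, \mu)} \, \d\nu(x_2).
\]
Unfolding the definition of $g_{x_2}$ and swapping the order of integration (legitimate by non-negativity) yields
\[
\int_Y \| g_{x_2} \|_{L^1(X, \mu)} \, \d\nu(x_2) \, = \, \iint_{X \times Y} |f(y_1, y_2)| \left( \int_Y b(x_2, y_2) \, \d\nu(x_2) \right) \d\mu(y_1) \, \d\nu(y_2),
\]
and the inner integral is bounded by $M$ uniformly in $y_2$ by the second hypothesis. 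Hence the right-hand side is at most $M \| f \|_{L^1(X \times Y)}$, which yields the announced bound $\| \T \|_{L^1 \to L^{1,\infty}} \leq C \, M$.

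I do not expect a genuine obstacle here: the proof is essentially a slice-wise use of the weak type estimate for $\A$ combined with the fact that the condition $\sup_{y_2} \int b(x_2, y_2) \, \d\nu(x_2) \leq M$ is precisely the $L^1 \to L^1$ boundedness of $\B$ (equivalently, $L^\infty \to L^\infty$ boundedness of its adjoint). The only mild care needed is to justify the measurability of $x_2 \mapsto \mu(E_\lambda^{x_2})$ and $x_2 \mapsto \| g_{x_2} \|_{L^1(X, \mu)}$, which follows from Tonelli applied to the non-negative integrands.
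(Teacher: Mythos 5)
Your proposal is correct and follows essentially the same route as the paper: apply the weak type $(1,1)$ bound of $\A$ to the slice function $g_{x_2}$, integrate over $x_2$ via Fubini--Tonelli, and use the uniform $L^1$ bound on $b(\cdot, y_2)$. No gaps; the measurability remarks you add are the only (minor) point the paper leaves implicit.
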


\begin{proof} Given $f \in L^1(X \times Y, \mu \otimes \nu)$, notice that $\int_{Y} b(x_2, y_2) \, |f(\cdot, y_2)| \, \d \nu(y_2) \in L^1(X)$ for a.e.-$\nu$ $x_2 \in Y$. Therefore the weak type $(1, 1)$ estimate of $\A$ implies that
$$
\mu(\{  x_1 \in X ; |\T(f)(x_1, x_2)|  > \lambda \}) \leq \frac{C}{\lambda} \int_{X} \! \bigg(\! \int_{Y} b(x_2, y_2) \, |f(y_1, y_2)| \, \d \nu(y_2) \bigg) \d \mu(y_1)
$$
holds for a.e.-$\nu$ $x_2 \in Y$.
Combining this with Fubini's theorem, we yield that
\begin{align*}
&(\mu \otimes \nu) (\{  (x_1 ,x_2) \in X \times Y ; \, |\T(f)(x_1, x_2)|  > \lambda \})\\
&= \int_{Y} \mu(\{  x_1 \in X ; \, |\T(f)(x_1, x_2)|  > \lambda \}) \, \d \nu(x_2)\\
&\leq \frac{C}{\lambda} \int_{Y} \bigg( \int_{X} \bigg( \int_{Y} b(x_2, y_2) \, |f(y_1, y_2)| \, \d \nu(y_2) \bigg) \, \d \mu(y_1) \bigg) \,  \d \nu(x_2).
\end{align*}
Then changing the order of integration, by the assumption of $b$, this lemma is proved.
\end{proof}

\medskip

\renewcommand{\theequation}{\thesection.\arabic{equation}}
\section{Standard method of localization for the Riesz transforms} \label{S2}
\setcounter{equation}{0}

\medskip

In this section, we will study the local part of the Riesz transform $\RR = \nabla_\M (-\Delta_\M)^{-\frac 12}$. To begin with, we represent $\RR$ as the sum of the local part $\RR_{\loc}$ and the part at infinity $\RR_\infty$. This step proceeds along the standard line, which has been widely adopted by other authors. In previous works mentioned in the end of Section \ref{SS12}, 
the Calder\'on-Zygmund theory has been successfully applied to analyze $\RR_{\loc}$, mainly due to the availability of upper bounds for $|\nabla p_t^\M|$ and $|\nabla \nabla p_t^\M|$. However, in our setting, we do not have a (complete) pointwise upper bound of $|\nabla_\M p_t^\M|$, which motives the adoption of the method developed in \cite{CD99}.

\medskip

\subsection{Weak $(1, 1)$ of $\RR_{\loc}$ on general weighted Riemannian manifolds}

The localization procedure could be done on a general non-compact complete, weighted Riemannian manifold $\NN$ in the sense of \cite{G09} satisfying $(ND_{\loc})$ w.r.t. a quasi-distance $\r$. By the definition of $\r$, there exists a constant $\omega \geq 1$ such that:
\begin{align} \label{QD}
\r(\xx, \zz) \leq \omega \, ( \r(\xx, \yy) + \r(\yy, \zz)), \qquad \forall \, \xx, \yy, \zz \in \NN.
\end{align}

Selecting a $\frac{1}{4\omega^2}$-maximal separated subset of $\NN$, we get a countable family of balls $\{ B^j = B(\xx^j, \frac{1}{2\omega}); \, \xx^j \in \NN, j \in \N \}$. It follows from ($ND_{\loc}$) that this family satisfies the following properties:
\begin{enumerate}[(a)]
\item $\NN = \bigcup_{j \in \N} B^j$;
\item the balls $B (\xx^j, \frac{1}{4 \omega^2})$ are pairwise disjoint;
\item there exists $N \in \N$, such that every point of $\NN$ is contained in at most $N$ balls $(2\omega^2 + \omega) B^j := B(\xx^j, \omega + \frac12)$.
\end{enumerate}

Fix a $C^\infty$ partition of unity $( \varphi_j )_{j \in \N}$ such that supp $\varphi_j \subseteq B^j$. Given any $1 \leq p < + \infty$, the definition of partition of unity and the property (c) imply that $\frac{1}{N^p} \leq \sum_{j} \varphi_j^p \leq 1$. Therefore for all $f \in C_0^\infty(\NN)$,
\begin{align} \label{pro of p}
 \| f \|_p^p \sim \sum_j \| f\varphi_j \|_p^p.
\end{align}

For $f \in C_0^\infty(\NN)$, we write
\begin{align} \label{estofrf}
\RR(f) = \sum_{j} \chi_{(2\omega^2 + \omega)B^j} \, \RR(f \varphi_j) + \sum_{j} \big( 1 - \chi_{(2\omega^2 + \omega)B^j} \big) \, \RR(f \varphi_j).
\end{align}

Let $R(\xx, \yy) = \frac{1}{\sqrt{\pi}} \int_{0}^{\infty} \nabla_\NN p_t^\NN(\xx, \yy) \, \frac{\d t}{\sqrt{t}}$, which is the kernel of the Riesz transform. Then from the construction, we have for $\xx \in \NN$,
\begin{align} \label{estofrin}
\left| \sum_{j} \big( 1 - \chi_{(2\omega^2 + \omega)B^j} \big) \, \RR (f \varphi_j)(\xx) \right| &\leq \sum_j \int_{\{ \yy; \, \r > 1\}} |R(\xx, \yy)| \, |f(\yy)| \, \varphi_j (\yy) \,\d \mu_\NN(\yy)\nonumber\\
& \leq \int_{\{ \yy; \, \r > 1 \}} |R(\xx, \yy)| \, |f(\yy)| \,\d \mu_\NN(\yy).
\end{align}
We define the local part of the Riesz transform
\begin{align}
\RR_{\loc}(f)(\xx) := \sum_{j} \chi_{(2\omega^2 + \omega) B^j} \, \RR(f \varphi_j)(\xx),
\end{align}
and the part at infinity
\begin{align}
\RR_{\infty}(f)(\xx) := \int_{\{ \yy; \, \r > 1 \}} |R(\xx, \yy)| \, |f(\yy)| \,\d\mu_\NN(\yy).
\end{align}
Hence, in order to prove \eqref{W11} on $\NN$, it is sufficient to prove that $\RR_{\loc}$ and $\RR_{\infty}$ are both of weak type $(1, 1)$. In this section we focus on $\RR_{\loc}$.

For the further developments, we address here a rather general theorem on the weak $(1, 1)$ of $\RR_{\loc}$.
Indeed, it is an analogy of  \cite[Theorem 1.2]{CD99} (see also \cite[Proposition 3]{LZ17}).

\begin{theo} \label{Rloc}
Let $\NN$ be a connected complete non-compact weighted Riemannian manifold satisfying $(ND_{\loc})$ w.r.t. the quasi-distance $\r$. Moreover, suppose that for any given $R_0 > 0$, the ball $B(\xx, R_0)$ equipped with the induced quasi-distance $\rho$ and measure $\d \mu_{\NN}$ satisfies $(ND)$, with a doubling constant independent of its center. Assume that the following small-time heat kernel upper bound is valid: for some $\theta \in (0, 1]$, there exists a constant $c \in (0, 1)$ such that
$$p_t^\NN(\xx, \yy) \leq c^{-1} \frac{1}{V_\NN \left(\xx, \sqrt{t} \right)} \exp{\bigg(- c \, \bigg(\frac{ \r(\xx, \yy)^2}{t}\bigg)^{\theta}\bigg)},  \quad \forall \, 0< t \leq 1, \, \xx, \yy \in \NN. \eqno(GUE_{\theta}') $$
Additionally, suppose that there exist some constant $a \ge \frac12(2\omega^2 + \omega + 1)$, where $\omega \ge 1$ is the constant in \eqref{QD}, and a bounded function $F : [0, +\infty) \to (0, +\infty)$, which satisfies
$$ \int_{0}^{1} F \bigg( \frac{1}{u} \bigg) \, \frac{\d u}{u} < +\infty,$$
such that for any $0< t \leq 1$, $s > 0$ and $\yy \in \NN$,
$$\int_{\{ \xx; \, \sqrt{t} \leq \r \leq a \}}  |\nabla_\NN p_s^\NN (\xx, \yy)| \, \d\mu_\NN (\xx) \leq \frac{1}{\sqrt{s}} F \bigg( \frac{t}{s} \bigg),
\eqno(WGE_{1}')$$
Furthermore we assume that $\RR_{\infty}$ is of weak type $(1, 1)$. Then $\RR_{\loc}$ is of weak type $(1, 1)$.
\end{theo}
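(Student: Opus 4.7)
The approach is to adapt the Coulhon--Duong singular integral method of \cite{CD99} to our quasi-metric, weighted setting, with $(WGE_1')$ substituting for the usual pointwise bound on $|\nabla_\NN p_s^\NN|$. By the norm equivalence \eqref{pro of p} and the bounded overlap property (c) of the family $\{(2\omega^2+\omega)B^j\}$, it suffices to prove, uniformly in $j$ and $\lambda>0$,
\[
\mu_\NN\bigl(\{\xx \in (2\omega^2+\omega)B^j:\, |\RR(f\varphi_j)(\xx)| > \lambda\}\bigr) \,\lesssim\, \|f\varphi_j\|_1/\lambda.
\]
Since $((2\omega^2+\omega)B^j,\r,\d\mu_\NN)$ is doubling with constant independent of $j$, I can perform a Calder\'on--Zygmund decomposition at level $\lambda$ inside this ball, writing $f\varphi_j = g_j + \sum_k b_{j,k}$ with $\|g_j\|_\infty \le C\lambda$, $\|g_j\|_1 \le \|f\varphi_j\|_1$, atoms $b_{j,k}$ supported in sub-balls $B_{j,k} = B(\yy_{j,k},r_{j,k}) \subseteq (2\omega^2+\omega)B^j$ of mean zero with $\|b_{j,k}\|_1 \le C\lambda\,\mu_\NN(B_{j,k})$, and $\sum_k \mu_\NN(B_{j,k}) \le C\|f\varphi_j\|_1/\lambda$.

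The good part is dispatched by the $L^2$-boundedness of $\RR$ (a direct consequence of $\|\nabla_\NN u\|_2^2 = \|(-\Delta_\NN)^{1/2}u\|_2^2$ and spectral calculus), combined with $\|g_j\|_2^2 \le C\lambda \|g_j\|_1$ and Chebyshev. For the bad part I would enlarge each $B_{j,k}$ to $B_{j,k}^* := \alpha B_{j,k}$ with $\alpha$ depending only on $\omega$; uniform doubling yields $\sum_k \mu_\NN(B_{j,k}^*) \lesssim \|f\varphi_j\|_1/\lambda$, reducing matters, via Chebyshev applied atom by atom, to the H\"ormander-type off-diagonal inequality
\[
\int_{(2\omega^2+\omega)B^j \setminus B_{j,k}^*} |\RR b_{j,k}(\xx)|\,\d\mu_\NN(\xx) \,\le\, C\|b_{j,k}\|_1. \qquad (\star)
\]

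To prove $(\star)$ I would use the subordination identity $\RR = \pi^{-1/2}\int_0^\infty \nabla_\NN \e^{s\Delta_\NN}s^{-1/2}\,\d s$ and split at $s_0 = r_{j,k}^2$. For $s \in (0,s_0]$, Fubini together with the quasi-triangle inequality (for which the constraints $a \ge \tfrac12(2\omega^2+\omega+1)$ and $\alpha$ large enough ensure both $\r(\xx,\yy) \le a$ for $\xx,\yy \in (2\omega^2+\omega)B^j$ and $\r(\xx,\yy) \gtrsim r_{j,k}$ for $\xx \notin B_{j,k}^*$) reduces the spatial integral, after dyadic decomposition of the annular region, to estimates of the form
\[
\int_{c\,r_{j,k}\le \r(\xx,\yy)\le a}|\nabla_\NN p_s^\NN(\xx,\yy)|\,\d\mu_\NN(\xx) \,\le\, s^{-1/2}\,F(c^2 r_{j,k}^2/s)
\]
supplied by $(WGE_1')$. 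Integrating $s^{-1/2}\d s$ and substituting $v = c^2 r_{j,k}^2/s$ produces $\int_{c^2}^\infty F(v)\,\d v/v$, which is finite by the boundedness of $F$ and the hypothesized $\int_0^1 F(1/u)\,\d u/u = \int_1^\infty F(v)\,\d v/v < +\infty$.

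The main obstacle is the large-time contribution $s \ge s_0$, where the same change of variable turns the $F$-integral into $\int_0^{c^2} F(v)\,\d v/v$, generally divergent. I plan to overcome this by exploiting the cancellation $\int b_{j,k}\,\d\mu_\NN=0$: write
\[
\nabla_\NN \e^{s\Delta_\NN}b_{j,k}(\xx) = \int\bigl(\nabla_\NN p_s^\NN(\xx,\yy) - \nabla_\NN p_s^\NN(\xx,\yy_{j,k})\bigr)\,b_{j,k}(\yy)\,\d\mu_\NN(\yy),
\]
and use the semigroup factorization $\e^{s\Delta_\NN} = \e^{(s/2)\Delta_\NN}\circ \e^{(s/2)\Delta_\NN}$ to transfer the $\yy$-oscillation into a small-time H\"older-type bound on $p_{s/2}^\NN$ (accessible via $(GUE_\theta')$ and the analyticity of the heat semigroup on $L^2$), while $(WGE_1')$ in the $\xx$-variable absorbs the resulting $\nabla_\NN p_{s/2}^\NN$ factor. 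Combining both time ranges yields $(\star)$ and closes the argument.
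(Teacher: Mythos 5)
Your small-time part ($s\le r_{j,k}^2$) is fine and matches the spirit of the hypotheses, but the large-time part of your H\"ormander estimate $(\star)$ has a genuine gap, and it is precisely the part you flag as ``the main obstacle''. Your plan is to exploit mean-zero cancellation by writing $\nabla_\NN\e^{s\Delta_\NN}b_{j,k}$ through the difference $\nabla_\NN p_s^\NN(\xx,\yy)-\nabla_\NN p_s^\NN(\xx,\yy_{j,k})$ and the factorization $\e^{s\Delta_\NN}=\e^{(s/2)\Delta_\NN}\e^{(s/2)\Delta_\NN}$. None of the available hypotheses supports this: $(GUE_\theta')$ is assumed only for $0<t\le 1$, so for large $s$ there is no pointwise or integrated bound on $p_{s/2}^\NN$ at all, hence no ``H\"older-type'' oscillation estimate in the $\yy$ variable; $(WGE_1')$ controls $\int|\nabla_\NN p_{s/2}^\NN(\xx,\zz)|\,\d\mu_\NN(\xx)$ only over annuli $\{\sqrt t\le\r(\xx,\zz)\le a\}$, whereas after inserting the intermediate variable $\zz$ (which runs over all of $\NN$) you need control for $\r(\xx,\zz)>a$, which is simply not assumed; and since $\r$ is merely a quasi-distance while $\nabla_\NN$ is the Riemannian gradient, there is no chaining argument converting an integrated gradient bound into an oscillation bound in $\yy$. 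The whole point of the theorem (following Coulhon--Duong) is that no spatial regularity of the kernel in the second variable is available, so a classical mean-zero-atom H\"ormander argument cannot be run.

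The paper's proof replaces your spatial cancellation by a cancellation in time. After the Calder\'on--Zygmund decomposition it writes, with $t_i=r_i^2$,
\begin{align*}
\RR(b)=\sum_i\RR(\chi_{5B}\,\e^{t_i\Delta_\NN}b_i)+\RR\big((1-\chi_{5B})\sum_i\e^{t_i\Delta_\NN}b_i\big)+\sum_i\RR\big((I-\e^{t_i\Delta_\NN})b_i\big).
\end{align*}
The first term is handled by the $L^2$ duality argument of Coulhon--Duong (using $(GUE_\theta')$, valid since $t_i\le1$, and the $L^2$ maximal function on $5B$); the second is dominated by $\RR_\infty$ applied to $\sum_i|\e^{t_i\Delta_\NN}b_i|$ and controlled by the assumed weak $(1,1)$ of $\RR_\infty$ together with $L^1$-contractivity of the heat semigroup --- note that your proposal never uses this hypothesis, which is a structural sign that something is off; the third is estimated through the kernel $k_t(\xx,\yy)=\pi^{-1/2}\int_0^\infty\nabla_\NN p_s^\NN(\xx,\yy)\big(s^{-1/2}-\chi_{\{s>t\}}(s-t)^{-1/2}\big)\d s$, where the factor $\big|s^{-1/2}-\chi_{\{s>t\}}(s-t)^{-1/2}\big|$ decays like $t\,s^{-3/2}$ for $s\gg t$ and thus makes the large-time contribution integrable once $(WGE_1')$ is applied on the annulus $\{\sqrt{t_i}\le\r\le 2\}$. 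If you want to salvage your argument, you should abandon the mean-zero route for large times and adopt this $I-\e^{t_i\Delta_\NN}$ smoothing, which is exactly what converts the divergent $\int_0^{c^2}F(v)\,\d v/v$ into a convergent integral and simultaneously explains why the weak $(1,1)$ of $\RR_\infty$ appears among the hypotheses.
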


\begin{remark} \label{remark2}
Before turning to the proof, let us make a few comments on the requirement $a \ge \frac12(2\omega^2 + \omega + 1)$. First, it is purely technical and could be relaxed to $a > 0$. To see this, it suffices to suitably modify the numerical constants initially chosen in the localization procedure $($involving the radius of balls used in the construction and the definition of $\RR_\infty$$)$ to adapt the value of $a$. While the remaining argument of the proof is unchanged. Second, in concrete settings, this requirement is usually inessential. For instance, on the product manifolds to be considered later, we can even achieve a stronger version of $(WGE_{1}')$ as follows. For any fixed $a > 0$,
there exists a bounded function $F_a : [0, +\infty) \to (0, +\infty)$, depending on $a$, which satisfies
$$ \int_{0}^{1} F_a \bigg( \frac{1}{u} \bigg) \, \frac{\d u}{u} < +\infty,$$
such that for any $0< t \leq 1$, $s > 0$ and $\yy \in \NN$,
$$
\int_{\{ \xx; \, \sqrt{t} \leq \r \leq a \}}  |\nabla_\NN p_s^\NN (\xx, \yy)| \, \d\mu_\NN (\xx) \leq \frac{1}{\sqrt{s}} F_a \bigg( \frac{t}{s} \bigg).
$$

\end{remark}

\begin{proof}
This proof is actually a straightforward adaption of the argument used in \cite[\S 4]{CD99}. We include it for the sake of clarity.
Assume for simplicity of exposition that $\omega = 1$ (the general case $\omega \ge 1$ can be handled accordingly but some notational inconvenience).

In view of the property (c), for any fixed $\xx \in \NN$, there are at most $N$ non-zero terms in the sum $\sum_{j} \chi_{3 B^j} \, \RR(f \varphi_j)(\xx)$. Consequently, for $\lambda > 0$,
\begin{align} \label{estofRl}
\mu_\NN \left( \left\{ \xx \in \NN; \, |\RR_{\loc} (f)(\xx)| > \lambda \right\} \right) &= \mu_\NN ( \{ \xx \in \NN; \, |\sum_{j} \chi_{3B^j} \RR(f \varphi_j)(\xx)| > \lambda \} )\nonumber\\
&\leq \sum_{j} \mu_\NN \left( \left\{ \xx \in 3B^j; \, |\RR (f\varphi_j)(\xx)| > \lambda/N \right\} \right) .
\end{align}

According to \eqref{pro of p} for $p = 1$, it suffices to show that there exists a universal constant $c > 0$, such that for all $\lambda >0$ and $j \in \N$
\begin{align} \label{estofRl2}
\mu_\NN ( \{ \xx \in 3 B^j; \, |\RR (f)(\xx)| > \lambda \} ) \leq c \,  \frac{\| f \|_1}{\lambda}, \quad \forall \, f \in C_0^\infty(B^j).
\end{align}

By assumption, each $B^j$ satisfies ($ND$) with a constant independent of $j$. From now on we drop the subscripts of $B^j$. We can apply the classical Calder\'on-Zygmund decomposition. There exists a constant $\kappa > 0$ (only depends on the constant in $(ND)$, therefore independent of $j$) such that, given $f \in C_0^\infty(B)$ and $\lambda > \frac{\| f \|_1}{\mu_\M(B)}$, we can decompose $f$ as
$$ f = g + b = g + \sum_i b_i, $$
and find a sequence of balls $B_i = B(\xx_i, r_i) \subseteq B$, satisfying
\begin{enumerate}[(i)]
\item $\|g \|_2^2 \leq \kappa \, \lambda \, \|f\|_1$;
\item supp $b_i \subseteq B_i$ and $\| b_i \|_1 \leq \kappa \, \lambda \, \mu_\NN (B_i)$;
\item $\sum_i \mu_\NN (2B_i) \leq \kappa \, \lambda^{-1} \| f \|_1$.
\end{enumerate}
It is clear that $r_i \le \diam (B) \le 1$.

Notice that
\begin{align} \label{Eb}
& \mu_\NN ( \{ \xx \in 3 B; \, |\RR (f)(\xx)| > \lambda \} )  \nonumber \\
& \leq \mu_\NN ( \{ \xx \in 3 B; \, |\RR (g)(\xx)| > \lambda/2 \} ) + \mu_\NN ( \{ \xx \in 3 B; \, |\RR (b)(\xx)| > \lambda/2 \} ).
\end{align}
Combining the $L^2$ boundedness of $\RR$ with (i), the first term is easy to estimate. Thus we only need to treat the second one. We write
\begin{align} \label{Eb1}
\RR(b) = \sum_{i} \RR( \chi_{5B} \, \e^{t_i\Delta_\NN}b_i) +  \RR \big( (1 - \chi_{5B}) \, \sum_{i} \e^{t_i\Delta_\NN} b_i \big) + \sum_i \RR((I - \e^{t_i \Delta_\NN}) \, b_i) ,
\end{align}
where $t_i = r_i^2$. Then
\begin{align} \label{Eb2}
& \mu_\NN ( \{ \xx \in 3B; \, |\RR (b)(\xx)| > \lambda/2 \} ) \leq \mu_\NN ( \{ \xx \in 3 B; \, |\RR \big( \sum_{i}  \chi_{5B} \, \e^{t_i\Delta_\NN} b_i \big)(\xx)| > \lambda/6 \} ) \nonumber \\
& \quad + \, \mu_\NN ( \{ \xx \in 3 B; \, | \RR \big( (1 - \chi_{5B}) \, \sum_{i} \e^{t_i\Delta_\NN} b_i \big)(\xx)| > \lambda/6 \} ) \nonumber \\
& \qquad + \, \mu_\NN ( \{ \xx \in 3 B; \, |\sum_i \RR((I - \e^{t_i \Delta_\NN}) \, b_i)(\xx)| > \lambda/6 \} ).
\end{align}

It follows from our assumption and $(ND)$ that the centered Hardy-Littlewood maximal operator is bounded on $L^2(5B)$. On the other hand, by construction, each $t_i \leq 1$. Therefore we can apply $(GUE_{\theta}')$. By the duality method used in  \cite[\S 3]{CD99}, we obtain
\begin{align*}
\big\| \sum_i \chi_{5B} \, \e^{t_i \Delta_\NN} b_i \big\|_2^2 \lesssim \lambda \, \| f \|_1.
\end{align*}
Hence the $L^2$ boundedness of $\RR$ and Chebyshev's inequality imply that
\begin{align} \label{Eb3}
\mu_\NN ( \{ \xx \in 3 B; \,  |\sum_{i} \RR( \chi_{5B} \e^{t_i \Delta_\NN} b_i)(\xx)| > \lambda/6 \} ) \lesssim \frac{\| f \|_1}{\lambda}.
\end{align}

As for $\RR \big( (1-\chi_{5B})  \, \sum_{i} \e^{t_i\Delta_\NN} b_i \big)$, we shall use $\RR_\infty$ to dominate it. In fact, for $\xx \in 3B$,
\begin{align*}
\left| \RR \big(( 1 - \chi_{5 B}) \,  \sum_{i}  \e^{t_i\Delta_\NN} b_i \big)(\xx)  \right| \leq \RR_{\infty} \big(  (1 - \chi_{5 B}) \, \sum_{i} \e^{t_i\Delta_\NN} b_i \big)(\xx)  \leq \RR_{\infty} \big( \sum_{i} |\e^{t_i\Delta_\NN} b_i| \big)(\xx).
\end{align*}
Combining this with the weak type $(1, 1)$ estimate of $\RR_\infty$, using the contractivity of  the heat semigroup $(\e^{t\Delta_\NN})_{t  > 0}$,
we obtain
\begin{align} \label{Eb4}
\mu_\NN ( \{ \xx \in 3 B; \, | \RR( (1 - \chi_{5 B}) \, \sum_{i} \e^{t_i\Delta_\NN} b_i)(\xx)| > \lambda/6 \} )
\lesssim \frac{\sum_i \| b_i \|_1}{\lambda} \lesssim \frac{ {\| f \|}_1}{\lambda},
\end{align}
where we have used the properties (ii) and (iii) in the last inequality.

Therefore, by \eqref{Eb}-\eqref{Eb4},  to get \eqref{estofRl2}, it remains to establish the following estimate:
\begin{align*}
\mu_\NN ( \{ \xx \in 3 B; \, |\sum_i \RR((I - \e^{t_i \Delta_\NN}) \, b_i)(\xx)| > \lambda/6 \} )  \lesssim \frac{\| f \|_1}{\lambda},
\end{align*}
which can be proved by the standard techniques. More precisely,  the left-hand side of the above inequality is majorized by
\begin{align*}
\mu_\NN ( \cup_i 2 B_i) + \mu_\NN ( \{ \xx \in 3 B \setminus (\cup_i 2 B_i); \, |\sum_i \RR((I - \e^{t_i \Delta_\NN}) \, b_i)(\xx)| > \lambda/6 \} ).
\end{align*}
Using the properties (ii) and (iii) again, via Chebyshev's inequality, it is enough to prove that

\begin{align} \label{estofRl3}
\int_{3 B \backslash 2 B_i } |\RR ((I - \e^{t_i \Delta_\NN}) \, b_i)(\xx)| \,\d \mu_\NN (\xx) \lesssim \| b_i  \|_1, \qquad \forall \, i.
\end{align}

Denote by $k_t$ the kernel of $\RR ( I - \e^{t \Delta_\NN})$. Since supp $b_i \subseteq B_i \subseteq B$, we have
\begin{align*}
& \int_{3B \backslash 2 B_i } |\RR ((I-\e^{t_i \Delta_\NN}) \, b_i)(\xx)| \,\d \mu_\NN(\xx) \\
 \leq & \int_{3B \backslash 2 B_i } \left( \int_{B_i} |k_{t_i} (\xx, \yy)|  \, |b_i(\yy)| \, \d \mu_\NN (\yy) \right) \,\d \mu_\NN (\xx)\\
 \leq & \int_{B_i} \left( \int_{\{ \xx; \, \sqrt{t_i} \leq \r \leq 2 \}} |k_{t_i} (\xx, \yy)| \,\d\mu_\NN (\xx) \right) |b_i(\yy)| \,\d\mu_\NN(\yy),
\end{align*}
where
\begin{align*}
k_t(\xx, \yy) = \frac{1}{\sqrt{\pi}}\int_{0}^{\infty} \nabla_\NN p_s^\NN(\xx , \yy) \left( \frac{1}{\sqrt{s}} - \frac{{\chi}_{\{s>t\}}}{\sqrt{s-t}}   \right) \, \d s.
\end{align*}

Combining ($WGE_{1}'$) with the equality above, we yield that
\begin{align*}
\int_{\{ \xx ; \, \sqrt{t} \leq \r \leq 2 \}} |k_t (\xx, \yy)| \,\d\mu_\NN (\xx) & \lesssim \int_{0}^{\infty} \left| \frac{1}{\sqrt{s}} - \frac{\chi_{\{s > t\}}}{\sqrt{s-t}}   \right| F \left(\frac{t}{s}\right) \,\frac{\d s}{\sqrt{s}}\\
 & \lesssim \int_{0}^{t} F \left(\frac{t}{s}\right) \, \frac{\d s}{s} + \int_{t}^{\infty} \left| \frac{1}{\sqrt{s}} - \frac{\chi_{\{s>t\}}}{\sqrt{s-t}}   \right| F \left(\frac{t}{s}\right) \,\frac{\d s}{\sqrt{s}}.
\end{align*}
By our assumptions on $F$, the above integral is bounded from above uniformly in $t > 0$. Thus \eqref{estofRl3} is proved. This ends the proof of Theorem \ref{Rloc}.
\end{proof}

\medskip

\subsection{Consequences of Theorem \ref{Rloc}} \label{SS32}

As an application of Theorem \ref{Rloc}, we establish the weak type $(1, 1)$ boundedness of $\RR_{\loc}$ defined on $\M = \H^n \times M$ in this subsection. Actually, we can extend this result to general direct product manifolds with exponential volume growth. Let $\E$ be a connected complete non-compact weighted Riemannian manifold. Slightly abusing notation, we denote by $r$ the geodesic distance on $\E$.

\begin{cor} \label{c1}
Let $M$ be a connected complete non-compact weighted Riemannian manifolds satisfying $(ND)$, $(GUE_{\theta})$ and $(WGE_{q, \theta})$ $($with $1 \leq q \leq 2$, $0 < \theta \leq 1$$)$ w.r.t. the quasi-distance $\widetilde{d}$. Moreover, suppose that for any given $R_0 > 0$, the ball $\widetilde{B}(x, R_0)$ equipped with the induced quasi-distance $\widetilde{d}$ and measure $\d \mu_M$ satisfies $(ND)$, with a doubling constant independent of its center. Assume that $\E$ satisfies $(ND_{\loc})$ and $(VE)$, whose heat kernel $p_t^\E$  has the small-time upper bound
$$  p_t^\E(\u , \u)  \leq \frac{C}{V_\E(\u, \sqrt{t})}, \quad \forall \, \u \in \E, \, \, 0< t \leq 1. $$
Furthermore, we assume that $\RR_\infty$ defined on $\E \times M$ is of weak type $(1, 1)$. Then $\RR_{\loc}$ defined on $\E \times M$ is of weak type $(1, 1)$.
\end{cor}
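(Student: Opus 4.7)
My strategy is to verify that $\NN := \E \times M$, equipped with the measure $\d\mu_\E \otimes \d\mu_M$ and the quasi-distance $\r(\X, \Y) := \max\{r(\u, \v), \widetilde{d}(\x, \y)\}$, satisfies all the hypotheses of Theorem \ref{Rloc}, then invoke that theorem together with the assumed weak $(1, 1)$ bound on $\RR_\infty$.

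For the doubling conditions, the $\r$-balls factorize as $B_\NN(\X, R) = B_\E(\u, R) \times \widetilde{B}_M(\x, R)$, so $V_\NN(\X, R) = V_\E(\u, R) \widetilde{V}_M(\x, R)$; hence $(ND_{\loc})$ on each factor passes to $\NN$, and the uniform doubling on $\r$-balls of any fixed radius transfers through the product as in Remark \ref{r0} (using \cite[Lemma 4.1]{ACDH03} for $\E$). To establish $(GUE_\theta')$, I first upgrade the small-time on-diagonal bound on $\E$ to a full small-time Gaussian estimate
\[
 p_t^\E(\u, \v) \leq \frac{C}{V_\E(\u, \sqrt{t})} \exp\!\Bigl( -c \, \frac{r(\u, \v)^2}{t} \Bigr), \qquad 0 < t \leq 1,
\]
via Davies's perturbation method combined with the always-valid Davies--Gaffney $L^2$ off-diagonal estimate and $(ND_{\loc})$. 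Multiplying this with $(GUE_\theta)$ on $M$, using $V_\NN = V_\E \cdot \widetilde{V}_M$, the elementary bound $\exp(-c r^2/t) \leq C'\exp(-c'(r^2/t)^\theta)$ (valid for $\theta \in (0, 1]$), and $(r^2/t)^\theta + (\widetilde{d}^2/t)^\theta \geq (\r^2/t)^\theta$, yields $(GUE_\theta')$ on $\NN$ with the same exponent.

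For $(WGE_1')$, I split $|\nabla_\NN p_s^\NN| \leq |\nabla_\E p_s^\E| \, p_s^M + p_s^\E \, |\nabla_M p_s^M|$ and integrate over $\{\sqrt{t} \leq \r \leq a\} \subseteq B_\E(\v, a) \times \widetilde{B}_M(\y, a)$, which lies in $\{r \geq \sqrt{t}\} \cup \{\widetilde{d} \geq \sqrt{t}\}$. On the part where $\widetilde{d} \geq \sqrt{t}$, I use $(WGE_{1, \theta})$ on $M$ (derived from $(WGE_{q, \theta})$ via \eqref{GUE2}) to obtain the decay $\tfrac{C}{\sqrt{s}} \exp(-c (t/s)^\theta)$ in the $M$-gradient integral, with the $\E$-heat factor handled by $\int p_s^\E \leq 1$. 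On the part where $r \geq \sqrt{t}$, I would derive a parallel small-time weighted $L^1$-gradient estimate on $\E$: from the Gaussian bound above together with the identity $\int |\nabla p_s^\E(\cdot, \v)|^2 \, \d\mu_\E = -\tfrac{1}{2} \partial_s p_{2s}^\E(\v, \v)$ and a Davies-type perturbation, I would extract a weighted $L^2$-gradient bound, then apply Cauchy--Schwarz on the uniformly doubling ball $B_\E(\v, a)$ to convert it to the required $L^1$ form. Summing produces $(WGE_1')$ on $\NN$ in the strengthened form of Remark \ref{remark2} with a bounded sub-Gaussian-type function $F$ of $t/s$. Theorem \ref{Rloc} then delivers the weak $(1, 1)$ boundedness of $\RR_{\loc}$.

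The main obstacle will be the $(WGE_1')$ step: since the corollary places no gradient hypothesis on $\E$, the $L^1$-gradient estimate on that factor must be extracted from only the on-diagonal heat kernel bound, which forces the detour through weighted $L^2$-gradient estimates and the use of uniform local doubling on balls of bounded radius.
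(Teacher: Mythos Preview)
Your plan is correct and matches the paper's proof: verify $(GUE_\theta')$ and $(WGE_1')$ on the product via off-diagonal and weighted-$L^2$ heat-kernel bounds on $\E$ (the paper cites Grigor'yan \cite{G97,G95,G09} where you invoke Davies, but the content is the same) and then apply Theorem~\ref{Rloc}. One small caveat: the domain splitting in $(WGE_1')$ actually yields \emph{four} cross terms (cf.\ the paper's display \eqref{e of int}), so beyond the two you name you also need the truncated heat integral $\int_{\{\sqrt t\le r\le a\}} p_s^\E$ with Gaussian decay in $t/s$ and the unweighted bound $\int_{\{r\le a\}}|\nabla_\E p_s^\E|\lesssim s^{-1/2}$, both valid for \emph{all} $s>0$ (not only small time)---these follow from the same weighted-$L^2$ machinery plus the integral maximum principle.
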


\begin{proof}
We denote $\E \times M$ briefly by $\W$. A quasi-distance $d$ defined on $\W$ is also given by \eqref{distance}. According to Theorem \ref{Rloc}, it is enough to verify the conditions ($GUE_{\theta}'$) and ($WGE_{1}'$) on $\W$ because the other assumptions hold obviously, taking into account Remark \ref{r0}.

We start with ($GUE_{\theta}'$). It follows from \cite[Theorem 1.1]{G97} (cf. also \cite[Chapter 16]{G09}) that the small-time upper bound of $p_t^{\E}$ and $(ND_{\loc})$ of $\E$ imply the following off-diagonal estimate: given
$\gamma > 4$,
\begin{align*}
p_t^{\E}(\u, \v)  \lesssim_\gamma \frac{1}{\sqrt{V_{\E}(\u, \sqrt{t})V_{\E}(\v, \sqrt{t})}} \exp \left( -\frac{r^2}{\gamma t}  \right), \qquad \forall \, t \in (0, 1], \ \u, \v \in \E.                                                                              \end{align*}

And it is clear that by $(VE)$,
$$  V_{\E}(\u, \sqrt{t}) \leq  V_{\E}(\v, \sqrt{t}+r) \leq C \, \e^{c \, \frac{r}{\sqrt{t}}} V_{\E}(\v, \sqrt{t}), \quad \forall \, t \in (0, 1], \, \, \u, \v \in \E.   $$

Thus we obtain the following classical small time upper bound of $p_t^{\E}$:
given 
$\gamma > 4$, we have
\begin{align} \label{off diagonal}
p_t^{\E}(\u, \v)  \lesssim_\gamma \frac{1}{V_{\E}(\u, \sqrt{t})}  \exp\left( -\frac{r^2}{\gamma t}  \right), \qquad \forall \, t \in (0, 1], \ \u, \v \in \E.
\end{align}
Moreover, by the trivial inequality $s^{\theta} \le 1 + s$ for all $s \ge 0$ and $0 < \theta \le 1$, we get
\begin{align} \label{nHn}
p_t^{\E}(\u, \v)  \lesssim_\gamma \frac{1}{V_{\E}(\u, \sqrt{t})}  \exp\left( -\frac{1}{\gamma} \, \bigg( \frac{r^2}{t} \bigg)^{\theta} \right), \qquad \forall \, t \in (0, 1], \ \u, \v \in \E.
\end{align}

Now, for $t \in (0, 1]$ and $\X = (\u, \x), \Y = (\v, \y) \in \E \times M = \W$, the basic property of the heat kernel on direct product manifolds says that $p_t^{\W}(\X, \Y) = p_t^{\E}(\u, \v) \, p_t^{M} (\x, \y)$. Combining this with \eqref{nHn}, the assumption $(GUE_{\theta})$ on $M$ implies  that there exists a constant $c \in (0, 1)$ such that
\begin{align} \label{on W}
p_t^{\W}(\X, \Y) &\lesssim \frac{1}{V_{\E}(\u, \sqrt{t}) \widetilde{V}_{M}(\x, \sqrt{t})}  \exp{\bigg\{- c \, \bigg[ \bigg(\frac{ r(\u, \v)^2}{t}\bigg)^{\theta} + \bigg(\frac{ \widetilde{d}(\x, \y)^2}{t}\bigg)^{\theta} \bigg]\bigg\}}\nonumber\\
&\lesssim \frac{1}{V_{\W}(\X, \sqrt{t})}  \exp{\bigg(- c \, \bigg(\frac{ d(\X, \Y)^2}{t}\bigg)^{\theta}\bigg)},
\end{align}
where we have used in the last line the following trivial observation:
\begin{align*}
\{\Y; \, d(\X, \Y) < s \} = \{\v; \, r(\u, \v) < s \} \times \{\y; \, \widetilde{d}(\x, \y) < s\}, \qquad \forall \, s > 0.
\end{align*}
In other terms, ($GUE_{\theta}'$) is valid on $\W$.

We turn to ($WGE_{1}'$). Indeed, the requirement $a \ge \frac12(2\omega^2 + \omega + 1)$ in Theorem \ref{Rloc} is only a technical assumption. The following proof also works for any given $a > 0$ in our setting (see Remark \ref{remark2}). Since ($WGE_{1}'$) holds trivially if $t > a^2$, in the following we assume $t \le a^2$. Recalling the definition of $d$, we obtain for all $t \in [0, 1]$ and $\Y = (\v, \y) \in \W$,
\begin{align} \label{e of int}
& \int_{\{ \X; \, \sqrt{t} \leq d \leq a \}}  |\nabla_{\W} p_s^\W (\X, \Y)| \, \d \mu_\W (\X) \nonumber\\
&\leq \int_{ \left\{ \X; \,  \sqrt{t} \leq r \leq a , \, \, \widetilde{d} \leq a  \right\}}  |\nabla_{\W} p_s^\W (\X, \Y)| \, \d \mu_\W(\X) +  \int_{\left\{ \X; \, r \leq a ,\, \, \, \sqrt{t} \leq \widetilde{d} \leq a  \right\}}  |\nabla_{\W} p_s^\W (\X, \Y)| \, \d \mu_\W(\X) \nonumber\\
& \leq \int_{\{\u; \, \sqrt{t} \leq r \leq a\}} p_s^{\E} \d\mu_{\E} \! \int_{\{\x ; \, \widetilde{d} \leq a \}} |\nabla_M p_s^M | \d\mu_M + \int_{\{\u; \, \sqrt{t} \leq r \leq a\}} |\nabla_{\E} p_s^{\E}| \d\mu_{\E} \! \int_{\{\x ; \, \widetilde{d} \leq a \}} p_s^M \d \mu_M \nonumber\\
&  + \int_{\{\u; \, r \leq a\}} p_s^{\E} \d \mu_{\E}  \int_{\{\x ; \, \sqrt{t} \leq \widetilde{d} \leq a \}} |\nabla_M p_s^M| \d\mu_M + \int_{\{\u; \,  r \leq a\}} |\nabla_{\E} p_s^{\E}| \d\mu_{\E}  \int_{\{\x ; \, \sqrt{t} \leq \widetilde{d} \leq a \}} p_s^M \d\mu_M.
\end{align}

This inequality implies that, in order to verify ($WGE_{1}'$) on $\W$, we only need to estimate the integrals on $\E$ and $M$ separately. We first consider the terms involving $\E$. Our proof is mainly based on the weighted $L^2$-norm estimate, which has become a standard tool in heat kernel theory. We refer the reader to the monograph \cite{G09}
for more details. Given $D > 2$, we define the weighted integrals of the heat kernel and its gradient as follows: for $s > 0$ and $\v \in \E$,
$$ E(\v, s) :=  \int_{\E} \,  p_s^{\E}(\u, \v)^2 \exp \left( \frac{r^2}{Ds} \right)  \, \d \mu_{\E}(\u),$$
and
$$E_1(\v, s) := \int_{\E} \, |\nabla_{\E} p_s^{\E}(\u, \v)|^2 \exp\left( \frac{r^2}{Ds} \right)  \, \d \mu_{\E}(\u).                                                                                             $$

Recall that a basic fact about the heat kernel is that $p_s^{\E}(\v, \v)$ is decreasing w.r.t. $s > 0$ for any given $\v \in \E$.
By \cite[Theorem 16.3]{G09} (see also \cite[Theorem 2.1]{G97}), it follows from the small-time upper bound of $p_t^\E$ and $(ND_{\loc})$ that
\begin{align}  \label{nEn1}
E(\v, s) \lesssim \frac{1}{V_{\E}(\v, \sqrt{s})}, \qquad \forall \, s \in (0, 1], \ \v \in \E.
\end{align}

Moreover, let $\lambda_1 \geq 0$ denote the bottom of the spectrum of $(-\Delta_\E)$. Combining \eqref{nEn1} with the integral maximum principle (see e.g. \cite[Theorem 12.1]{G09}),
we obtain (see also \cite[(15.25)]{G09})

\begin{align} \label{weighted estimate}
E(\v, s) \lesssim \frac{\e^{-2\lambda_1 s}}{ V_{\E}(\v, \min \{\sqrt{s}, 1\}) } , \qquad \forall \, s > 0, \ \v \in \E.
\end{align}
Then  \cite[Theorem 1.1]{G95} (notice that the main results and proofs therein remain valid in the setting of weighted Riemannian manifolds without modifications) implies that
\begin{align} \label{nEn2}
E_1(\v, s) \lesssim \frac{\e^{-2\lambda_1s}}{s V_{\E}(\v, \min \{\sqrt{s}, 1\}) }, \qquad \forall \, s > 0, \ \v \in \E.
\end{align}

On the other hand, by the method of decomposition in annuli, $(VE)$ implies that
\begin{align} \label{nEn3}
\int_{ \{ \u; \, \sqrt{t} \leq r \leq a  \}} \e^{-\frac{r^2}{Ds}} \, \d \mu_{\E}(\u)  \lesssim_a \e^{-\frac{t}{2Ds}} V_{\E}(\v, \min \{\sqrt{s}, 1\}), \qquad \forall \, s > 0, \ t \in [0, 1].
\end{align}
Here and in \eqref{nEn4}-\eqref{truncated estimate} below, the implicit constants may increase exponentially w.r.t. $a$.
Combining
this with \eqref{weighted estimate} and \eqref{nEn2}, by Cauchy-Schwarz inequality, we obtain for all $s > 0$, $t \in [0, 1]$ and $\v \in \E$,
\begin{align} \label{nEn4}
\int_{ \{ \u ; \, \sqrt{t} \leq r \leq a   \}}  p_s^{\E}(\u, \v) \, \d\mu_{\E}(\u) \lesssim_a \exp \left(-\frac{t}{4Ds}- \lambda_1s\right),
\end{align}
and
\begin{align}  \label{truncated estimate}
\int_{ \{ \u ; \,  \sqrt{t} \leq r \leq a   \}} |\nabla_{\E} p_s^{\E}(\u, \v)| \, \d\mu_{\E}(\u) \lesssim_a  \frac{1}{\sqrt{s}} \exp\left(-\frac{t}{4Ds}-\lambda_1s\right).
\end{align}

Next, we turn to the analysis of the terms in \eqref{e of int} involving $M$, which is much simpler. A direct computation shows that there exists $c > 0$ such that for all $s > 0$, $t \in [0, 1]$ and $\y \in M$,
\begin{align} \label{nEn5}
\int_{ \{ \x ; \, \sqrt{t} \leq \widetilde{d} \leq a   \}}  p_s^{M}(\x, \y) \, \d\mu_M (\x) \lesssim \exp \left(- c \left( \frac{t}{s} \right)^\theta \right),
\end{align}
and
\begin{align} \label{nEn6}
\int_{ \{ \x ; \,  \sqrt{t} \leq \widetilde{d} \leq a   \}} |\nabla_{M} p_s^{M}(\x, \y)| \, \d\mu_M (\x) \lesssim  \frac{1}{\sqrt{s}} \exp \left(- c \left( \frac{t}{s} \right)^\theta \right).
\end{align}

In conclusion, collecting
\eqref{nEn4}-\eqref{nEn6}, from \eqref{e of int}, ($WGE_{1}'$) holds on $\W$ for any $a > 0$ and $F_a (h) = c_a^{-1} \e^{-c_a h^{\theta}}$, which completes our proof.
\end{proof}

\medskip

\begin{remark}
The weighted norm $E(\cdot, t)$ is an important tool to establish pointwise upper bounds of the heat kernel, based on the following inequality (see \cite{G95})
\begin{align*}
p_t^{\E}(\u, \v) \leq \sqrt{E(\u, t/2)E(\v, t/2)}\exp \left( - \frac{r^2}{2Dt} \right).
\end{align*}
Combining this with \eqref{weighted estimate} and $(ND_{\loc})$, the heat kernel $p_t^{\E}$ satisfies the following local estimate: given $r_0 > 0$, for all $t > 0$ and $\u, \v \in \E$ with $r = r(\u, \v) \leq r_0$,
\begin{align} \label{Local Estimate}
p_t^{\E}(\u, \v) \lesssim_{r_0} \frac{1}{V_{\E}(\u, \min \{\sqrt{t}, 1\})} \exp \left(- \frac{r^2}{2Dt}  - \lambda_1t   \right),
\end{align}
Then by the similar argument used in \eqref{on W}, there exists $c > 0$ such that for all $t > 0$ and $\X = (\u, \x), \Y = (\v, \y) \in \W$ with $d = d(\X, \Y) \le 1$,
\begin{align} \label{local on W}
p_t^{\W}(\X, \Y) \lesssim \frac{1}{V_{\W}(\X, \min \{\sqrt{t}, 1\})} \exp \left( -c \left( \frac{d^2}{t} \right)^\theta  \right).
\end{align}
In Section \ref{S4}, this estimate will be used in the study of the heat maximal operator.
\end{remark}

\medskip

It is clear that $\H^n$ is an example of such $\E$ (see \eqref{small time gaussian}). Combining Corollary \ref{c1} with Theorems \ref{t3} and \ref{t4} in the next section, we conclude that
\begin{cor} \label{c2}
$\RR_{\loc}$ defined on $\H^n \times M$ is of weak type $(1, 1)$.
\end{cor}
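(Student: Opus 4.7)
The plan is to invoke Corollary \ref{c1} with $\E = \H^n$. The hypotheses on $M$ -- namely $(ND)$, $(GUE_\theta)$, $(WGE_{q,\theta})$ w.r.t.\ $\widetilde{d}$, together with the uniform doubling of the balls $\widetilde{B}(x, R_0)$ -- are precisely those carried over from Theorem \ref{nt1}, so nothing further has to be checked on the $M$-side. Thus the task reduces to verifying that $\H^n$ plays the role of $\E$ in Corollary \ref{c1}, and that $\RR_\infty$ on $\H^n \times M$ is of weak type $(1,1)$.

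First I would verify the three conditions on $\E = \H^n$ required by Corollary \ref{c1}. The local doubling $(ND_{\loc})$ and the exponential volume estimate $(VE)$ at infinity both follow directly from the explicit volume formula \eqref{ve} recalled in Subsection \ref{real hyperbolic}. The on-diagonal small-time upper bound
\[
p_t^{\H^n}(\u, \u) \,\lesssim\, \frac{1}{V_{\H^n}(\u, \sqrt{t})}, \qquad 0 < t \le 1,
\]
is exactly \eqref{small time gaussian}. All three assumptions on $\E$ are therefore immediate, and no extra work is needed here.

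The remaining hypothesis of Corollary \ref{c1} is the weak type $(1,1)$ boundedness of $\RR_\infty$ on $\H^n \times M$. This is the substantive input and constitutes the real obstacle; it is precisely what Theorems \ref{t3} and \ref{t4} in the next section are designed to supply. The natural strategy there is to exploit the sharp exponential decay of the heat kernel at infinity on $\H^n$, together with the tensor structure \eqref{hk}, and to combine the kernel bound of Lemma \ref{weak type on hn} with the tensorisation principle of Lemma \ref{weak type 11}. However, establishing those theorems is not the business of the present corollary, which may simply quote them.

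Granting the weak type $(1,1)$ of $\RR_\infty$ from Theorems \ref{t3} and \ref{t4}, Corollary \ref{c1} applies verbatim and yields the weak type $(1,1)$ of $\RR_{\loc}$ on $\H^n \times M$, as desired. In summary, the proof is a short bookkeeping argument: check the three classical properties of $\H^n$ (all automatic from Subsection \ref{real hyperbolic}), cite Theorems \ref{t3} and \ref{t4} for $\RR_\infty$, and conclude via Corollary \ref{c1}.
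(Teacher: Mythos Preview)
Your proposal is correct and follows exactly the paper's own argument: verify that $\H^n$ satisfies the hypotheses on $\E$ in Corollary~\ref{c1} (via \eqref{ve} and \eqref{small time gaussian}), cite Theorems~\ref{t3} and~\ref{t4} for the weak type $(1,1)$ of $\RR_\infty$, and conclude. The paper's proof is just a one-line version of what you wrote.
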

The weak $(1, 1)$ property of $\RR_{\infty}$ on a general $\W$ is a challenging problem. In Section \ref{S6}, we shall give a sufficient condition, whose put-forward is inspired by the case $\M = \H^n \times M$.

\medskip

\renewcommand{\theequation}{\thesection.\arabic{equation}}
\section{Proof of Theorem \ref{nt1}: the part at infinity}
\setcounter{equation}{0} \label{S3}

\medskip
In this section, we shall prove the weak type $(1, 1)$ estimate of the part at infinity $\RR_\infty$.  For this purpose, set in the sequel for $\X = (\u, \x), \Y = (\v, \y) \in \H^n \times M$,
\begin{gather*}
R_1(\X, \Y) := \frac{1}{\sqrt{\pi}} \int_0^{\infty}  \left| \nabla_{\H^n} p_t^{\H^n}(\u, \v) \right| \,  p_t^{M}(\x, \y) \, \frac{\d t}{\sqrt{t}}, \\ R_2(\X, \Y) := \frac{1}{\sqrt{\pi}} \int_0^{\infty} p_t^{\H^n}(\u, \v) \, \left| \nabla_M p_t^{M}(\x, \y) \right| \, \frac{\d t}{\sqrt{t}}.
\end{gather*}

Now we define $\RR_{i, \infty} \, (i= 1,2)$ as follows:
\begin{align}
\RR_{i, \infty}f(\X) := \int_{\{\Y ; \, d(\X, \Y) \geq 1 \}} R_i(\X, \Y) \, |f(\Y)| \, \d\mu_\M (\Y).
\end{align}
Notice that $\RR_{\infty}(f) \leq \RR_{1, \infty}(f) + \RR_{2, \infty}(f)$. Hence it
is sufficient to prove the operators $\RR_{i, \infty} \, (i= 1,2)$ are of weak type $(1, 1)$.
\medskip

\subsection{Weak $(1, 1)$ of $\RR_{1, \infty}$}
We start with $\RR_{1, \infty}$, which is easier to handle for the reason that $R_1(\X, \Y)$ has an explicit pointwise upper bound.

\begin{theo} \label{t3}
 $\RR_{1, \infty}$ defined  on $\M$ is of weak type $(1, 1)$.
\end{theo}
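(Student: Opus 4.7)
The plan is to bound $R_1(\X,\Y)\chi_{\{d(\X,\Y)\geq 1\}}$ pointwise by (countably many) tensor-product kernels $a(\u,\v)\cdot b(\x,\y)$ to which Lemma \ref{weak type 11} applies, then sum in $L^{1,\infty}$. Natural $\H^n$-factors are $\chi_{\{r<1\}}$ (a local average, $L^1\!\to\! L^1$-bounded by homogeneity of $\H^n$) and $e^{-(n-1)r}$ (weak $(1,1)$ by Lemma \ref{weak type on hn}); the corresponding $M$-factors $b$ must have column integrals bounded uniformly. Split $\{d\geq 1\}=\{r\geq 1\}\cup\{r<1,\,\widetilde{d}\geq 1\}$.

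On $\{r<1,\widetilde{d}\geq 1\}$, I would use the elementary bound $re^{-r^2/(4t)}\leq\sqrt{2t/e}$ in \eqref{gr of hk} to dominate $|\nabla_{\H^n}p_t^{\H^n}(\u,\v)|$ by a function of $t$ alone with overall decay $e^{-(n-1)^2t/4}$, and then split $e^{-c(\widetilde{d}^2/t)^\theta}\leq e^{-c(\widetilde{d}^2/t)^\theta/2}\cdot e^{-c/(2t)^\theta}$ (valid because $\widetilde{d}\geq 1$). The new weight $e^{-c/(2t)^\theta}$ tames the small-$t$ singularity $t^{-(n+2)/2}$, and the remaining factor $\widetilde{V}_M(\x,\sqrt{t})^{-1}e^{-c(\widetilde{d}^2/t)^\theta/2}p_t^M$ integrates in $\x$ to a bound that is uniform in $\y$ and $t$ by \eqref{be1}. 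Lemma \ref{weak type 11} with $a=\chi_{\{r<1\}}$ handles this region.

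On $\{r\geq 1\}$, exploit the algebraic identity $-\frac{r^2}{4t}-\frac{n-1}{2}r-\frac{(n-1)^2}{4}t=-(n-1)r-\frac{(r-(n-1)t)^2}{4t}$ to extract $e^{-(n-1)r}$ from \eqref{gr of hk}, and set $a(\u,\v)=e^{-(n-1)r}$. Partition the $t$-integral into three sub-regions according to the ratio $t/r$: (I) $t\leq r/(2(n-1))$, (II) $r/(2(n-1))<t<2r/(n-1)$, (III) $t\geq 2r/(n-1)$. In (I) the Gaussian factor gives $e^{-r^2/(16t)}\leq e^{-(n-1)r/16}\cdot e^{-1/(32t)}$ (using $r\geq 1$), which absorbs the polynomial factor $r^{(n+1)/2}(1+r+t)^{(n-1)/2}$ via $r^{(n+1)/2}e^{-(n-1)r/16}\lesssim 1$ and leaves a $t$-dependent Schwartz-type weight; in (III) the same factor gives $e^{-(n-1)^2t/16}$ and the relation $r\leq (n-1)t$ eliminates the remaining $r$-dependence. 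Both sub-regions yield pointwise bounds $e^{-(n-1)r}\cdot b_i(\x,\y)$ with $b_i$ column-integrable (since $\int_Mp_t^M\,d\mu_M(\x)\leq 1$ and the $t$-weights are integrable).

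The main obstacle is sub-region (II), where a naive estimate yields $R_1^{(\text{II})}(\X,\Y)\lesssim e^{-(n-1)r}\,r^{-1/2}\,p_{r/(n-1)}^M(\x,\y)$ whose $M$-factor depends on $r$ and thus does not factor. To restore the tensor structure, I would perform a dyadic decomposition $\{r\geq 1\}=\bigsqcup_{k\geq 0}\{2^k\leq r<2^{k+1}\}$. Within the $k$-th shell, $t$ lies in the dyadic time-window $[2^{k-1}/(n-1),2^{k+2}/(n-1))$, so by $(GUE_\theta)$ and volume doubling on $(M,\widetilde{d})$ one gets a \emph{uniform} majorant
\[
p_t^M(\x,\y)\lesssim q_k(\x,\y):=\widetilde{V}_M\bigl(\x,2^{k/2}\bigr)^{-1}\exp\!\Bigl(-c\,\bigl(\widetilde{d}(\x,\y)^2/2^k\bigr)^{\theta}\Bigr),
\]
and \eqref{be1} gives $\sup_{\y}\int_M q_k\,d\mu_M(\x)\leq C$ independent of $k$. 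Hence $R_1^{(\text{II})}\chi_{\{r\in\text{shell}_k\}}\lesssim 2^{-k/2}\cdot\bigl(e^{-(n-1)r}\chi_{\{r\in\text{shell}_k\}}\bigr)\cdot q_k(\x,\y)$, and Lemma \ref{weak type 11} produces operators $\mathcal{T}_k$ with $\|\mathcal{T}_k\|_{L^1\to L^{1,\infty}}\lesssim 2^{-k/2}$. Summing over $k$ via the distributional inequality $\mu\bigl\{\bigl|\sum_k \mathcal{T}_kf\bigr|>\lambda\bigr\}\leq\sum_k\mu\{|\mathcal{T}_kf|>\lambda c_k\}$ with $c_k\propto 2^{-k/4}$ gives the weak $(1,1)$ bound for the peak contribution. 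Combining with the earlier sub-regions completes the proof.
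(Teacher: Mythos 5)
Your argument is correct, but it takes a genuinely different route from the paper's for the main region $\{r>1\}$. The paper inserts the weight $\e^{\pm\varepsilon r/t}$ and proves two $r$-independent facts: $\int_0^\infty|\nabla_{\H^n}p_t^{\H^n}|\,\e^{\varepsilon r/t}\,\d t\lesssim_\varepsilon \e^{-(n-1)r}$ by Laplace's method, and $\sup_{t>0}t^{-1/2}p_t^M\,\e^{-\varepsilon r/t}\lesssim\min\{\widetilde V_M(\y,1)^{-1},\widetilde d^{-1}\widetilde V_M(\y,\widetilde d)^{-1}\}$, whose column integrals are uniformly bounded; a single application of Lemma \ref{weak type 11} with Str\"omberg's Lemma \ref{weak type on hn} then finishes, with no summation needed. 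You instead split the time integral around the peak $t\approx r/(n-1)$, treat the off-peak ranges by elementary absorption of polynomials into the leftover Gaussian/exponential factors (note that in your sub-region (III) the integrability of the $t$-weight near $0$ relies on the built-in constraint $t\ge 2/(n-1)$ coming from $r\ge1$, which you should state), and for the peak you dyadically decompose in $r$, obtaining $\|\T_k\|_{L^1\to L^{1,\infty}}\lesssim 2^{-k/2}$ and summing exactly as the paper does for $\RR_{2,\infty}$ in Theorem \ref{t4} (this is \cite[Lemma 2.3]{SW69}, or your explicit $c_k$ argument). Two small points to tidy: when you integrate $q_k$ (and similarly the kernel on $\{r<1,\widetilde d\ge1\}$) in $\x$, the volume factor sits at $\x$, so you must either invoke the symmetry $p_t^M(\x,\y)=p_t^M(\y,\x)$ or re-center via \eqref{nd} before applying \eqref{be1}, as the paper does in \eqref{nBn3}; and the phrase ``$r^{(n+1)/2}(1+r+t)^{(n-1)/2}$'' double-counts the polynomial, though the intended absorption $r^{(n+1)/2}\e^{-(n-1)r/16}\lesssim1$ is right. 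In exchange for the extra dyadic summation, your proof avoids both the $\e^{\varepsilon r/t}$ trick and the pointwise bound \eqref{nBn6}, while the paper's version is shorter and yields the stronger statement that only one tensor-product application is needed; your treatment of $\{r<1,\widetilde d\ge1\}$ (bounding the $\H^n$ gradient by a function of $t$ alone and borrowing part of the $M$-Gaussian to tame small $t$) is also a valid alternative to the paper's product of the kernels $r/V_{\H^n}(\v,r)$ and $\widetilde d^{-1}\widetilde V_M(\y,\widetilde d)^{-1}$, both routes giving $L^1$-boundedness there.
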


\begin{proof} Since $\{ (\X, \Y) \in \M^2 ; d > 1 \} \subseteq S_1 \cup S_2$ with
\begin{align}
S_1 := \{ (\X, \Y) \in \M^2; \, \widetilde{d} > 1, \, r \leq 1 \}, \qquad S_2 := \{ (\X, \Y) \in \M^2; \, r > 1 \},
\end{align}
it suffices to show that the integral operators with kernels $R_1(\X, \Y) \, \chi_{S_j}$ ($j = 1, 2$) are of weak type $(1, 1)$.

We start with the kernel $R_1(\X, \Y) \, \chi_{S_1}$. And we shall prove that
\begin{align} \label{nBn}
\sup_{\Y \in \M} \int_{\M} R_1(\X, \Y) \, \chi_{S_1}(\X, \Y) \, \d\mu_\M(\X) < +\infty,
\end{align}
then the integral operator with the kernel $R_1(\X, \Y) \, \chi_{S_1}$ is bounded on $L^1(\M)$. Indeed, we can write
\begin{align} \label{nBn1}
0 < R_1(\X, \Y) &\lesssim  \int_{0}^{\infty} \left| \nabla_{\H^n} p_t^{\H^n}(\u, \v) \right| \, p_t^{M}(\x, \y) \, \frac{\d t}{\sqrt{t}}  \nonumber \\
& \leq  \int_{0}^{\infty} \left| \nabla_{\H^n} p_t^{\H^n}(\u, \v) \right| \, \d t \cdot \sup_{t>0} \frac{1}{\sqrt{t}} p_t^{M}(\x, \y).
\end{align}

As a consequence of \eqref{gr of hk}, for $r \in (0 , 1]$, we have
\begin{align} \label{nBn2}
\int_{0}^{\infty} \left| \nabla_{\H^n} p_t^{\H^n}(\u, \v) \right| \, \d t & \lesssim r \int_{0}^{\infty} t^{-\frac{n+2}{2}} \exp{ \left( -\frac{r^2}{C t} \right) } \, \d t 
\lesssim r^{-(n - 1)} \lesssim \frac{r}{V_{\H^n}(\v, r)},
\end{align}
where we have used \eqref{ve} in the last line.

On the other hand,  using the symmetric property of the heat kernel, ($GUE_{\theta}$) and \eqref{nd} imply that
\begin{align} \label{nBn3}
\sup_{t>0}  \frac{1}{\sqrt{t}} \, p_t^M (\x, \y) & = \sup_{t>0}  \frac{1}{\sqrt{t}} \, p_t^M (\y, \x) \nonumber\\
& \lesssim \sup_{t > 0}  \frac{1}{\sqrt{t}} \,
  \frac{1}{\widetilde{V}_M \left(\y, \sqrt{t} \right)} \, \e^{-c \, \left( \frac{\widetilde{d}^2}{t}\right)^\theta} \nonumber \\
& \lesssim \sup_{t > 0}  \frac{\widetilde{d}}{\sqrt{t}} \, \bigg(1 + \frac{\widetilde{d}}{\sqrt{t}}\bigg)^\upsilon \,  \e^{-c \, \left( \frac{\widetilde{d}^2}{t}\right)^\theta}  \cdot \frac{1}{\widetilde{d}} \, \frac{1}{ \widetilde{V}_M \big(\y, \widetilde{d} \big) } \nonumber\\
&\lesssim \frac{1}{\widetilde{d}} \, \frac{1}{ \widetilde{V}_M \big(\y, \widetilde{d} \big) }.
\end{align}
Then by the standard method of decomposition in annuli, the volume doubling property implies that
\begin{align} \label{nBn4}
& \sup_{\y \in M} \int_{\{ \x ; \,  \widetilde{d} > 1 \} } \frac{1}{\widetilde{d}} \, \frac{1}{ \widetilde{V}_M \big( \y, \widetilde{d} \big) }  \, \d\mu_M (\x) \nonumber \\
&  = \sup_{\y \in M} \sum_{j = 0}^{+\infty} \int_{\{ \x ; \,  2^j < \widetilde{d} \le 2^{j+1} \} } \frac{1}{\widetilde{d}} \, \frac{1}{ \widetilde{V}_M \big(\y, \widetilde{d} \big) }  \, \d\mu_M(\x)< +\infty.
\end{align}
Similarly, the local volume doubling property on $\H^n$ (cf. ($ND_{\loc}$)) implies that
\begin{align} \label{nBn5}
\sup_{\v \in \H^n} \int_{\{ \u ; \,  r \le 1 \} } \frac{r}{V_{\H^n}(\v, r)} \, \d\mu_{\H^n}(\u) < + \infty.
\end{align}

Hence, \eqref{nBn} is a direct consequence of \eqref{nBn1}-\eqref{nBn5}.

We now turn to study $R_1(\X, \Y) \, \chi_{S_2}$. For a fixed $\varepsilon \in (0, \frac{1}{64})$, we write
\begin{align*}
0 < R_1(\X, \Y) &\lesssim  \int_{0}^{\infty} \left| \nabla_{\H^n} p_t^{\H^n}(\u, \v) \right| \, p_t^{M}(\x, \y) \, \frac{\d t}{\sqrt{t}}\\
& \leq  \int_{0}^{\infty} \left| \nabla_{\H^n} p_t^{\H^n}(\u, \v) \right| \, \e^{\varepsilon \frac{r}{t}} \, \d t \cdot \sup_{t>0}  \frac{1}{\sqrt{t}} \, p_t^{M}(\x, \y) \, \e^{-\varepsilon \frac{r}{t}}.
\end{align*}

For $r > 1$, the gradient estimate \eqref{gr of hk} implies that
\begin{align*}
&\int_{0}^{\infty} \left| \nabla_{\H^n} p_t^{\H^n}(\u, \v) \right| \, \e^{\varepsilon \frac{r}{t}} \, \d t \\ &\lesssim \e^{-(n-1) \, r}   \sqrt{r} \int_{0}^{\infty} \left( \frac{r}{t} \right)^{\frac12} \, \left( 1 + \frac{r}{t} \right)^{\frac{n-1}{2}} \, \exp\left\{ -\frac{r}{4}\left((n-1)\sqrt{\frac{t}{r}}- \sqrt{\frac{r}{t}} \right)^2 + \varepsilon \frac{r}{t} \right\} \, \frac{\d t}{t}.
\end{align*}
Changing variables by $t = r \, \lambda$, the last integral is equal to
\begin{align*}
& \int_{0}^{\infty} \lambda^{-\frac12} \, ( 1 + \lambda^{-1})^{\frac{n-1}{2}} \,  \exp\left\{-\frac{r}{4} \frac{((n-1)\lambda - 1  )^2}{\lambda}+ \frac{\varepsilon}{\lambda} \right\} \, \frac{\d\lambda}{\lambda}\\
&= \int_{(n - 1) \lambda - 1 \ge \frac{1}{2}} + \int_{- 1 < (n - 1) \lambda - 1 \le - \frac{1}{2}} + \int_{-\frac{1}{2} < (n - 1) \lambda - 1 < \frac{1}{2}};
\end{align*}
and the standard Laplace's method says that the main contribution comes from the third term, which is bounded by a multiple of $r^{-\frac{1}{2}}$. We conclude that when $r > 1$,
\begin{align} \label{Im1A}
\int_{0}^{\infty} \left| \nabla_{\H^n} p_t^{\H^n}(\u, \v) \right| \, \e^{\varepsilon \frac{r}{t}} \, \d t  \lesssim_{\varepsilon}  \e^{-(n - 1) \, r}.
\end{align}
According to Lemma \ref{weak type on hn}, the integral operator on $\H^n$, with the kernel defined by the left-hand side of \eqref{Im1A},  is of weak type $(1, 1)$.

On the other hand, by a suitable modification to the proof of \eqref{nBn3}, we can show that for $r > 1$,
\begin{align} \label{nBn6}
\sup_{t>0}  \frac{1}{\sqrt{t}} p_t^{M}(\x, \y) \e^{-\varepsilon \frac{r}{t}}  \lesssim_{\varepsilon}  \min \bigg\{ \frac{1}{\widetilde{V}_M \big(\y, 1 \big)},  \frac{1}{\widetilde{d}} \, \frac{1}{\widetilde{V}_M \big(\y, \widetilde{d} \big) } \bigg\}.
\end{align}
We claim that the integral operator on $M$, with the kernel defined by the left-hand side of the last inequality, is bounded on $L^1$. Indeed, it can be easily verified that
\begin{align} \label{nBn7}
\sup_{\y \in M} \int_{M} \min \bigg\{ \frac{1}{\widetilde{V}_M \big(\y, 1 \big)},  \frac{1}{\widetilde{d}} \, \frac{1}{\widetilde{V}_M \big(\y, \widetilde{d} \big) } \bigg\} \, \d\mu_M(\x)  < +\infty.
\end{align}
Hence, by Lemma \ref{weak type 11}, the integral operator with the kernel $R_1(\X, \Y) \, \chi_{S_2}$ is of weak type $(1, 1)$. In conclusion, the weak type $(1, 1)$ estimate of $\RR_{1, \infty}$ is established.
\end{proof}

\medskip

\subsection{Weak $(1, 1)$ of $\RR_{2, \infty}$}
In this section, we turn to the analysis of $\RR_{2, \infty}$. Compared with $\RR_{1, \infty}$, the main difficulty here is the lack of a pointwise upper bound of $| \nabla_M p_t^{M} |$. By \eqref{GUE2}, without loss of generality, we only assume $(WGE_{1, \theta})$ on $M$, i.e. there exist constants $c, c' > 0$ and $0 < \theta \leq 1$ such that for all $t > 0$, $\y \in M$,
$$\int_M \left| \nabla_M p_t^{M}(\x, \y) \right| \, \exp{\bigg( c \,  \bigg(\frac{\widetilde{d}(\x, \y)^2}{t} \bigg)^{\theta}\bigg)} \, \d\mu_M(\x) \leq \frac{c'}{\sqrt{t} }. $$
In particular, as its corollary
\begin{align} \label{whole space gradient norm}
\int_M \left| \nabla_M p_t^{M}(\x, \y) \right| \, \d\mu_M (\x) \lesssim \frac{1}{\sqrt{t} }.
\end{align}

\begin{theo} \label{t4}
$\RR_{2, \infty}$ defined on $\M$ is of weak type $(1, 1)$.
\end{theo}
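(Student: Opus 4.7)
The plan is to follow the structure of the proof of Theorem \ref{t3}: decompose $\{d > 1\} \subseteq S_1 \cup S_2$ with $S_1 := \{\widetilde{d} > 1, \, r \leq 1\}$ and $S_2 := \{r > 1\}$, and establish that the integral operators with kernels $R_2(\X, \Y) \, \chi_{S_j}$ $(j = 1, 2)$ are each of weak type $(1, 1)$. The novelty compared with Theorem \ref{t3} is that a pointwise upper bound on $|\nabla_M p_t^M|$ is not available, so in the $M$-direction all estimates must be routed through the weighted $L^1$ bound $(WGE_{1, \theta})$.

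On $S_1$ the plan is to establish $L^1$-boundedness of the kernel, which trivially implies weak type $(1,1)$. By Fubini it suffices to bound $\sup_{\Y \in \M} \int_0^\infty Q_t(\v) \, P_t(\y) \, \d t/\sqrt{t}$, where $Q_t(\v) := \int_{\{r \leq 1\}} p_t^{\H^n}(\u, \v) \, \d\mu_{\H^n}(\u)$ and $P_t(\y) := \int_{\{\widetilde{d} > 1\}} |\nabla_M p_t^M(\x, \y)| \, \d\mu_M(\x)$. The formula \eqref{hk of hn} and the fact that $V_{\H^n}(\v, 1)$ is a universal constant give $Q_t(\v) \leq 1$ for $0 < t \leq 1$ and $Q_t(\v) \lesssim t^{-3/2} \e^{-(n-1)^2 t/4}$ for $t \geq 1$. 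Since $\e^{c(\widetilde{d}^2/t)^\theta} \geq \e^{c/t^\theta}$ on $\{\widetilde{d} > 1\}$, $(WGE_{1, \theta})$ yields $P_t(\y) \lesssim t^{-1/2} \e^{-c/t^\theta}$. The resulting $t$-integral converges at both endpoints, the factor $\e^{-c/t^\theta}$ handling $t \to 0^+$ and $\e^{-(n-1)^2 t/4}$ handling $t \to +\infty$.

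On $S_2$ the plan is to apply Lemma \ref{weak type 11}. The elementary identity
\begin{align*}
\frac{r^2}{4t} + \frac{(n-1)^2 t}{4} \, = \, \frac{(n-1)r}{2} + \left( \frac{r}{2\sqrt{t}} - \frac{(n-1)\sqrt{t}}{2} \right)^2
\end{align*}
applied to \eqref{hk of hn} gives $p_t^{\H^n}(r) \, \e^{(n-1)r} \lesssim t^{-n/2}(1 + r + t)^{(n-3)/2}(1 + r) \exp\!\big(-(r/(2\sqrt{t}) - (n-1)\sqrt{t}/2)^2\big)$. Setting $h(t) := \sup_{r > 1} p_t^{\H^n}(r) \, \e^{(n-1)r}$ and distinguishing $t \leq 1/(n-1)$ (where the supremum is attained near the boundary $r = 1$, giving $h(t) \lesssim t^{-n/2} \e^{-c/t}$) from $t > 1/(n-1)$ (where it is attained near $r = (n-1)t$, giving $h(t) \lesssim t^{-1/2}$) shows $\int_0^\infty h(t)/t \, \d t < +\infty$. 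Defining $b(\x, \y) := \int_0^\infty h(t) \, |\nabla_M p_t^M(\x, \y)| \, \d t/\sqrt{t}$, one has the pointwise bound $R_2(\X, \Y) \, \chi_{S_2} \leq \e^{-(n-1)r} \, b(\x, \y)$, and $(WGE_{1, \theta})$ gives $\sup_\y \int_M b(\x, \y) \, \d\mu_M(\x) \lesssim \int_0^\infty h(t)/t \, \d t < +\infty$. Since Lemma \ref{weak type on hn} says the integral operator on $\H^n$ with kernel $\e^{-(n-1)r}$ is of weak type $(1, 1)$, Lemma \ref{weak type 11} concludes.

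The main obstacle is the decoupling on $S_2$: because $p_t^{\H^n}$ and $|\nabla_M p_t^M|$ are coupled through $t$, the kernel $R_2$ is not a genuine tensor product, and one must prove that extracting the exact exponential decay $\e^{-(n-1)r}$ (matched to the volume growth on $\H^n$ in Lemma \ref{weak type on hn}) leaves a residual weight $h(t)$ that is integrable against $\d t/t$, so the $M$-side can then be absorbed by $(WGE_{1, \theta})$. The saving is that the residual Gaussian $\exp(-(r/(2\sqrt{t}) - (n-1)\sqrt{t}/2)^2)$ localizes in $t$ sharply enough to tame both the polynomial prefactor $(1 + r + t)^{(n-3)/2}(1 + r)$ after supping in $r$ and the $t^{-n/2}$ singularity at the origin.
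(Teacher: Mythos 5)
Your proposal is correct, and on the part at infinity in the $\H^n$-variable it takes a genuinely different route from the paper. On $S_1$ your argument is essentially the paper's: both reduce to a uniform $L^1_\X$ bound of the kernel, with the $M$-factor controlled by $(WGE_{1,\theta})$ restricted to $\{\widetilde d>1\}$ (giving the $\e^{-c/t^{\theta}}/\sqrt{t}$ decay); whether one first takes $\sup_t \sqrt{t}\,p_t^{\H^n}$ as in the paper or integrates in $t$ via Tonelli as you do is immaterial. On $S_2$, however, the paper decouples by taking $\sup_{t>0}\sqrt{t}\,p_t^{\H^n}(r)\,\e^{\varepsilon r/t}\lesssim_\varepsilon \e^{-(n-1)r}$ (their \eqref{nZn4}), which leaves an $M$-factor still depending on $r$ through $\e^{-\varepsilon r/t}$; they then need a dyadic decomposition in $r$, the bound $\|\T_j\|_{L^1\to L^{1,\infty}}\lesssim 2^{-j/2}$, and the Stein--Weiss summation lemma \cite[Lemma 2.3]{SW69}. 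You instead take the supremum in $r$ of $p_t^{\H^n}(r)\,\e^{(n-1)r}$, and the completed square $\frac{r^2}{4t}+\frac{(n-1)^2t}{4}=\frac{(n-1)r}{2}+\bigl(\frac{r}{2\sqrt t}-\frac{(n-1)\sqrt t}{2}\bigr)^2$ together with \eqref{hk of hn} indeed gives $h(t)\lesssim t^{-n/2}\e^{-c/t}$ for small $t$ (here the restriction $r>1$ is essential) and $h(t)\lesssim t^{-1/2}$ for large $t$ (the polynomial prefactor at $r\approx(n-1)t$ contributes $t^{(n-1)/2}$, absorbed by $(1+|u|)^{(n-1)/2}\e^{-u^2}\lesssim 1$ after centering), so that $\int_0^\infty h(t)\,\d t/t<+\infty$. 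This produces a genuine tensor-product majorant $\e^{-(n-1)r}\cdot b(\x,\y)$ with $\sup_\y\|b(\cdot,\y)\|_{L^1(M)}<\infty$ by \eqref{whole space gradient norm}, so Lemma \ref{weak type 11} combined with Lemma \ref{weak type on hn} applies directly. What your route buys is simplicity: no $\e^{\pm\varepsilon r/t}$ weights, no dyadic decomposition, no summation of weak-type norms. What it gives up is the extra $r^{-1/2}$ decay that the paper's decoupling extracts (visible in the $2^{-j/2}$ gain), which is not needed for the weak $(1,1)$ conclusion since Str\"omberg's lemma already handles the kernel $\e^{-(n-1)r}$ by itself.
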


\begin{proof} As presented in the preceding subsection, it suffices to show that the integral operators with kernels $R_2(\X, \Y) \, \chi_{S_j}$ ($j = 1, 2$) are of weak type $(1, 1)$.

We start with the analysis of $R_2(\X, \Y) \, \chi_{S_1}$. Notice that
\begin{align} \label{nZn1}
0 < R_2(\X, \Y) &\lesssim \int_{0}^{\infty}  p_t^{\H^n}(\u, \v) \, | \nabla_M p_t^{M}(\x, \y) | \, \frac{\d t}{\sqrt{t}} \nonumber \\
& \leq  \sup_{t>0} \sqrt{t} \, p_t^{\H^n}(\u, \v) \cdot \left( \int_{0}^{\infty} {\left| \nabla_M p_t^{M}(\x, \y) \right|} \, \frac{\d t}{t}  \right).
\end{align}
The heat kernel estimate on $\H^n$ (see \eqref{local estimate}) implies that when $r \leq 1$,
\begin{align} \label{nZn2}
\sup_{t>0} \sqrt{t} \, p_t^{\H^n}(\u, \v) \lesssim \sup_{t>0} t^{-\frac{n}{2}+\frac{1}{2}} \e^{-\frac{r^2}{C t}} \lesssim r^{-(n-1)} \lesssim \frac{r}{V_{\H^n}(\v, r)}.
\end{align}

On the other hand, the assumption $(WGE_{1, \theta})$ on $M$ implies that there exists a constant $c > 0$, such that for all $\y \in M$ and $t > 0$,
$$\int_{\{\x ; \, \widetilde{d} > 1 \}} \left| \nabla_M p_t^{M}(\x, \y) \right| \, \d\mu_M(\x) \lesssim \frac{\e^{-c \, t^{ - \theta}}}{\sqrt{t} }. $$

Hence
\begin{align} \label{nZn3}
& \sup_{\y \in M} \int_{\{\x ; \, \widetilde{d} > 1 \}} \left( \int_{0}^{\infty} {\left| \nabla_M p_t^{M}(\x, \y) \right|} \, \frac{\d t}{t}  \right) \, \d\mu_M(\x) \lesssim \int_{0}^{\infty} \frac{\e^{-c \, t^{ - \theta}}}{t^{3/2} } \, \d t < + \infty.
\end{align}

It deduces from \eqref{nZn1}-\eqref{nZn3}  and \eqref{nBn5} that
\[
\sup_{\Y \in \M} \int_{\M} R_2(\X, \Y) \,  \chi_{S_1}(\X, \Y) \, \d \mu_\M(\X) < +\infty.
\]
Therefore the integral operator with the kernel $R_2(\X, \Y) \, \chi_{S_1}$ is bounded on $L^1$.

We turn to study $R_2(\X, \Y) \, \chi_{S_2}$. For a fixed $\varepsilon \in (0, \frac{1}{64})$, we write
\begin{align*}
0 < R_2(\X, \Y) & \lesssim  \int_{0}^{\infty}  p_t^{\H^n}(\u, \v) \, | \nabla_M p_t^{M}(\x, \y) | \, \frac{\d t}{\sqrt{t}} \\
& \leq  \sup_{t>0} \sqrt{t} \, p_t^{\H^n}(\u, \v) \, \e^{\varepsilon \, \frac{r}{t}} \cdot \left( \int_{0}^{\infty} {\left| \nabla_M p_t^{M}(\x, \y) \right|} \, \e^{-\varepsilon \frac{r}{t}} \, \frac{\d t}{t}  \right).
\end{align*}

The heat kernel estimate \eqref{hk of hn} implies that when $r > 1$,
\begin{align} \label{nZn4}
& \sup_{t>0} \sqrt{t} \, p_t^{\H^n}(\u, \v) \, \e^{\varepsilon \frac{r}{t}} \nonumber\\
&\lesssim \sup_{t > 0}  \frac{r}{t} \left( 1 + \frac{r}{t} \right)^{\frac{n-3}{2}} \exp \left\{-\frac{r}{4}\left((n-1)\sqrt{\frac{t}{r}}- \sqrt{\frac{r}{t}} \right)^2 +  \varepsilon \frac{r}{t} \right\} \cdot \e^{-(n - 1) \, r}\nonumber\\
& \lesssim \e^{-(n - 1) \, r} \, \sup_{\lambda > 0} \lambda^{-1}(1 + \lambda^{-1})^{\frac{n-3}{2}} \exp\left\{-\frac{1}{4} \frac{((n-1)\lambda-1)^2}{\lambda} + \frac{\varepsilon}{\lambda} \right\} \nonumber\\
&\lesssim_{\varepsilon} \e^{-(n - 1) \, r}.
\end{align}
In the last step we have used the following simple observation: the continuous function
\[
g_{\varepsilon}(\lambda):= \lambda^{-1}(1 + \lambda^{-1})^{\frac{n-3}{2}} \exp\left\{-\frac{1}{4} \frac{((n-1)\lambda-1)^2}{\lambda} + \frac{\varepsilon}{\lambda} \right\},
\]
which is defined on $(0, \, +\infty)$, satisfies $\lim_{\lambda \to 0^+} g_{\varepsilon}(\lambda) = \lim_{\lambda \to +\infty} g_{\varepsilon}(\lambda) = 0$.

Then we set
\begin{align*}
& T(\X, \Y)  :=  \e^{-(n - 1) \, r} \cdot \left( \int_{0}^{\infty} {\left| \nabla_M p_t^{M}(\x, \y) \right|} \, \e^{-\varepsilon \frac{r}{t}} \, \frac{\d t}{t}  \right) \chi_{\{ r > 1 \}},\\
& \T f(\X)  := \int_{\M} T(\X, \Y) \, |f(\Y)| \, \d\mu_\M (\Y).
\end{align*}
It is enough to prove that the operator $\T$ is of weak type $(1, 1)$.

We define for $j \in \N$,
\begin{align*}
T_j(\X, \Y) := T(\X, \Y) \, \chi_{\{ 2^j  < \, r \leq 2^{j+1}  \}} ,\qquad
\T_j f(\X) := \int_{\M} T_j(\X, \Y)  \, |f(\Y)| \,\d \mu_\M(\Y).
\end{align*}
Then it is clear that
\begin{align*}
T_j(\X, \Y) \le \e^{-(n - 1) \, r} \cdot \left( \int_{0}^{\infty} {\left| \nabla_M p_t^{M}(\x, \y) \right|} \, \e^{-\varepsilon \frac{2^j}{t}} \, \frac{\d t}{t} \right).
\end{align*}

We claim that $\| \T_j \|_{L^1\to L^{1,\infty}} \lesssim_{\varepsilon} 2^{-j/2}$. Combining Lemma \ref{weak type on hn} and Lemma \ref{weak type 11}, it remains to show that
\begin{align*}
\sup_{\y \in M} \int_{M} \left( \int_{0}^{\infty} {\left| \nabla_M p_t^{M}(\x, \y) \right|} \,  \e^{-\varepsilon \frac{2^j}{t}} \, \frac{\d t}{t} \right) \, \d\mu_M(\x) \lesssim_{\varepsilon} 2^{-j/2}, \qquad \forall \, j \in \N.
\end{align*}
In fact, changing the order of integration and using \eqref{whole space gradient norm} lead to
\begin{align*}
&\int_{M} \left( \int_{0}^{\infty} {\left| \nabla_M p_t^{M}(\x, \y) \right|} \, \e^{-\varepsilon \frac{2^j}{t}} \, \frac{\d t}{t} \right) \, \d\mu_M(\x)\\
& = \int_{0}^{\infty} \left( \int_{M}  {\left| \nabla_M p_t^{M}(\x, \y)\right|} \, \d\mu_M(\x) \right) \,  \e^{-\varepsilon \frac{2^j}{t}} \, \frac{\d t}{t}\\
& \lesssim \int_{0}^{\infty} \frac{\e^{-\varepsilon \frac{2^j}{t}}}{t^{3/2}} \, \d t = 2^{-j/2} \, \int_{0}^{\infty} \frac{\e^{ -\varepsilon s^{-1}}}{s^{3/2}} \, \d s \lesssim_{\varepsilon} 2^{-j/2}.
\end{align*}
Then  \cite[Lemma 2.3]{SW69} implies the weak type $(1, 1)$ of $\T = \sum_{j = 0}^{\infty} \T_j$. Hence the integral operator with the kernel $R_2(\X, \Y) \, \chi_{S_2}$ is of weak type $(1, 1)$. The proof is completed.
\end{proof}

\medskip

\renewcommand{\theequation}{\thesection.\arabic{equation}}
\section{The heat maximal operator on $\M$} \label{S4}
\setcounter{equation}{0}

\medskip
In this section, we shall establish the weak type $(1, 1)$ property of the heat maximal operator on $\M$,   
 which is defined by
\begin{align*}
\HH (f)(\X) = \sup_{t>0} |\e^{t \Delta_\M}f(\X)|, \quad \forall \, f \in L^p (\M), \, 1 \leq p \leq + \infty.
\end{align*}
The basic strategy is the same as that used in the study of Riesz transform. We first split $\HH$ as the sum of the local part and the part at infinity. Then we prove they are of weak type $(1, 1)$ separately.

Let us first write $\HH$ as
\begin{align*}
\HH (f)(\X) &\leq \sup_{t>0} \left| \int_{\{\Y; \, d \leq 1 \}} p_t^\M (\X, \Y) f(\Y) \,\d \mu_\M(\Y) \right| + \sup_{t>0} \left| \int_{\{\Y; \, d > 1 \}} p_t^\M(\X, \Y) f(\Y) \,\d \mu_\M(\Y) \right| \\
&\leq \sup_{t>0} \int_{\{\Y; \, d \leq 1 \}} p_t^\M(\X, \Y) \, |f(\Y)| \,\d \mu_\M (\Y)  + \int_{\{ \Y; \, d > 1 \}} \sup_{t>0} p_t^\M(\X, \Y) \, |f(\Y)| \,\d\mu_\M (\Y)\\
&:= \HH_{\loc} (f)(\X) +  \HH_\infty (f)(\X).
\end{align*}

The local part $\HH_{\loc}$ is easier to handle. Combining the local heat kernel estimate \eqref{local on W} and $(ND_{\loc})$, $\HH_{\loc}$ can be dominated by a constant multiple of the local centered Hardy-Littlewood maximal operator $\MM_{\loc}$, which is defined by
$$\MM_{\loc}(f)(\X) = \sup_{0< R \leq 1} \frac{1}{V_\M (\X, R)} \int_{B(\X, R)} |f(\Y)|\, \d \mu_\M(\Y), $$
whose weak $(1, 1)$ property follows from ($ND_{\loc}$). Hence we need only consider the part at infinity $\HH_\infty$.

\begin{theo} \label{t5}
$\HH_\infty $ defined on $\M = \H^n \times M$ is of weak type $(1, 1)$, where $M$ is as described in Theorem \ref{nt2}.
\end{theo}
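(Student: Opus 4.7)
The plan is to follow the strategy for $\RR_\infty$ developed in Theorems \ref{t3}--\ref{t4}: decompose $\{d > 1\} = S_1 \cup S_2$ with $S_1 = \{r \leq 1,\, \widetilde{d} > 1\}$ and $S_2 = \{r > 1\}$, and prove that the integral operator with kernel $\sup_{t > 0} p_t^{\M}(\X, \Y)$ restricted to each $S_j$ is of weak type $(1,1)$.

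On $S_1$ I would use the trivial factorisation
\begin{equation*}
\sup_{t > 0} p_t^{\M}(\X, \Y) \leq \Big(\sup_{t > 0} \sqrt{t}\, p_t^{\H^n}(\u, \v)\Big) \cdot \Big(\sup_{t > 0} t^{-1/2}\, p_t^{M}(\x, \y)\Big).
\end{equation*}
For $r \leq 1$ the first supremum is $\lesssim r/V_{\H^n}(\v, r)$ as in \eqref{nZn2}, and for $\widetilde{d} > 1$ the second is $\lesssim 1/(\widetilde{d}\,\widetilde{V}_M(\y, \widetilde{d}))$ as in \eqref{nBn3}. Since both bounds are integrable in their respective variables by \eqref{nBn4}--\eqref{nBn5}, the kernel is uniformly $L^1$ in $\X$ and the operator is $L^1 \to L^1$ bounded on $S_1$.

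On $S_2$ the same product splitting fails because $\sup_t p_t^M$ blows up on the diagonal $\widetilde{d} = 0$, and a plain $\e^{\pm \varepsilon r/t}$ reweighting yields no decay in $\widetilde{d}$. Instead, I would exploit the sharp concentration of $p_t^{\H^n}(r)$ in $t$ around $r/(n-1)$ for $r \geq 1$. From the explicit form \eqref{hk of hn}, for $r \geq 1$,
\begin{equation*}
p_t^{\H^n}(r) \leq C\, r^{-1/2}\, \e^{-(n-1)r}\, \Lambda(t/r),
\end{equation*}
where $\Lambda(\lambda) = \lambda^{-n/2}(1+\lambda)^{(n-3)/2}\,\e^{-\psi(\lambda)}$, with $\psi(\lambda) = 1/(4\lambda) + (n-1)^2\lambda/4 - (n-1)/2 \geq 0$ vanishing only at $\lambda^* = 1/(n-1)$; thus $\Lambda$ is bounded and has super-polynomial decay as $\lambda \to 0$ and exponential decay as $\lambda \to \infty$. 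Plugging this into $p_t^{\M} = p_t^{\H^n}\, p_t^{M}$, applying $(GUE_{\theta})$, using the doubling of $\widetilde{V}_M$ via \eqref{nd} (anchoring the volume at scale $\sqrt{r}$), and optimising over $\lambda = t/r$ by Laplace's method (optimum $\lambda \sim \alpha^{\theta/(\theta+1)}$ for $\alpha := \widetilde{d}^{\,2}/r$ large) gives the pointwise bound
\begin{equation*}
\sup_{t > 0} p_t^{\M}(\X, \Y) \lesssim r^{-1/2}\, \e^{-(n-1)r}\, \widetilde{V}_M(\x, \sqrt{r})^{-1}\, \Phi(\widetilde{d}^{\,2}/r), \qquad r > 1,
\end{equation*}
with $\Phi$ bounded and decaying like $\e^{-c\,\alpha^{\theta/(\theta+1)}}$ at infinity.

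To conclude I would split $S_2$ dyadically by $r \in (2^j, 2^{j+1}]$, $j \geq 0$. The hyperbolic factor $2^{-j/2}\,\e^{-(n-1)r}\,\chi_{\{2^j < r \leq 2^{j+1}\}}$ is weak $(1,1)$ on $\H^n$ with norm $\lesssim 2^{-j/2}$ by Lemma \ref{weak type on hn}, and the $M$-factor $\widetilde{V}_M(\x, 2^{j/2})^{-1}\, \Phi(\widetilde{d}^{\,2}/2^j)$ is $L^1 \to L^1$ bounded on $M$ with norm $\lesssim 1$ uniformly in $j$, via an annular decomposition of the type \eqref{be1} that plays the doubling \eqref{nd} against the fast decay of $\Phi$. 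Lemma \ref{weak type 11} then yields weak type $(1,1)$ for each $j$-th block with norm $\lesssim 2^{-j/2}$, and \cite[Lemma 2.3]{SW69} sums the blocks to deliver the weak type $(1,1)$ estimate on $S_2$. The main technical obstacle is the pointwise bound containing the factor $\Phi$: it cannot be obtained from any separating inequality of the form $\sup_t(AB) \leq \sup_t A\cdot \sup_t B$, and genuinely requires the joint Laplace-type optimisation to transfer the Gaussian decay of $p_t^M$ at the optimal time $t \sim r/(n-1)$ into the final bound.
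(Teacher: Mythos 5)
Your treatment of $S_1$ coincides with the paper's. On $S_2$ your argument is sound, but it takes a genuinely different (and heavier) route than the paper, and your reason for abandoning the product splitting is mistaken: the paper proves Theorem \ref{t5} exactly by the weighted splitting
$\sup_{t>0} p_t^\M(\X,\Y) \le \sup_{t>0} \sqrt{t}\, p_t^{\H^n}(\u,\v)\,\e^{\varepsilon r/t}\cdot \sup_{t>0} t^{-1/2} p_t^{M}(\x,\y)\,\e^{-\varepsilon r/t}$.
The point you overlook is that no decay in $\widetilde{d}$ beyond what $(GUE_\theta)$ already provides is needed: since $r>1$, the weight $\e^{-\varepsilon r/t}\le \e^{-\varepsilon/t}$ kills the small-$t$ blow-up near the diagonal, and \eqref{nBn6}--\eqref{nBn7} (already established in the proof of Theorem \ref{t3}) show that the $M$-factor is bounded by $\min\{ \widetilde{V}_M(\y,1)^{-1},\ \widetilde{d}^{-1}\widetilde{V}_M(\y,\widetilde{d})^{-1}\}$, which is uniformly in $L^1(M)$; together with \eqref{nZn4}, Lemma \ref{weak type on hn} and Lemma \ref{weak type 11}, this settles $S_2$ directly, with no dyadic decomposition and no summation. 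Your alternative --- the joint Laplace-type optimisation yielding the pointwise bound $\sup_{t>0}p_t^\M \lesssim r^{-1/2}\e^{-(n-1)r}\,\widetilde{V}_M(\x,\sqrt{r})^{-1}\,\Phi(\widetilde{d}^{\,2}/r)$ with $\Phi$ of stretched-exponential decay, followed by dyadic blocks in $r$, Lemma \ref{weak type 11} blockwise, and the summation via \cite[Lemma 2.3]{SW69} as in the proof of Theorem \ref{t4} --- does go through: the optimum $\lambda\sim\alpha^{\theta/(\theta+1)}$ is computed correctly, the polynomial factors coming from \eqref{nd} are absorbed by the decay of $\Lambda$ and $\Phi$, the $\H^n$-blocks have weak $(1,1)$ norm $\lesssim 2^{-j/2}$, and the \eqref{be1}-type annular integration gives the uniform $L^1$ bound for the $M$-factor. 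What your route buys is a sharper pointwise estimate for $\sup_{t>0}p_t^\M$ on $\{r>1\}$, of possible independent interest; what it costs is the extra bookkeeping and the appeal to Stein--Weiss, neither of which is necessary here, and your closing claim that the bound ``cannot be obtained from any separating inequality'' is refuted by the paper's own two-line argument.
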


\begin{proof} It suffices to show that the integral operators with the kernels $\sup_{t>0} p_t^\M(\X, \Y) \, \chi_{S_j}$ ($j = 1, 2$) are of weak type $(1, 1)$.

We start with the kernel $\sup_{t>0} p_t^\M(\X, \Y) \, \chi_{S_1}$. By \eqref{hk}, we obtain for $\X = (\u, \x), \Y = (\v, \y) \in \H^n \times M$,
\begin{align*}
\sup_{t>0} p_t^\M(\X, \Y)
\leq \sup_{t>0} \sqrt{t} \, p_t^{\H^n}(\u, \v) \cdot \sup_{t>0} \frac{1}{\sqrt{t}} \, p_t^{M}(\x, \y).
\end{align*}
It deduced from \eqref{nBn3}, \eqref{nBn4}, \eqref{nZn2} and \eqref{nBn5} that
$$
\sup_{\Y \in \M} \int_{\M} \sup_{t>0} p_t^\M (\X, \Y) \, \chi_{S_1}(\X, \Y) \, \d \mu_\M(\X) < +\infty.
$$
Therefore the integral operator with the kernel $\sup_{t>0} p_t^\M(\X, \Y) \, \chi_{S_1}$ is bounded on $L^1$.

Then we turn to the analysis of $\sup_{t>0} p_t^\M(\X, \Y) \, \chi_{S_2}$.
Taking an $\varepsilon \in (0, \frac{1}{64})$, by \eqref{hk}, we can write for $r > 1$
\begin{align*}
\sup_{t>0} p_t^\M(\X, \Y)  \leq \sup_{t>0} \sqrt{t} \, p_t^{\H^n}(\u, \v)  \, \e^{\varepsilon \frac{r}{t}} \cdot \sup_{t>0} \frac{1}{\sqrt{t}} \, p_t^{M}(\x, \y) \, \e^{-\varepsilon \frac{r}{t}}.
\end{align*}
Combining this with \eqref{nBn6}, \eqref{nBn7} and \eqref{nZn4}, it follows from Lemmas \ref{weak type on hn} and \ref{weak type 11} that the integral operator with the kernel $\sup_{t>0} p_t^\M(\X, \Y) \, \chi_{S_2}$ is of weak type $(1, 1)$. This finishes the proof.
\end{proof}

\begin{remark}
Recall that the first-order heat maximal operator on $\M$ is defined by
\begin{align*}
\HH_1 (f)(\X) = \sup_{t>0} \bigg| t \, \frac{\partial}{\partial t} \, \e^{t \Delta_\M}f(\X) \bigg|, \quad \forall \, f \in L^p (\M), \, 1 \leq p < + \infty.
\end{align*}
It follows from \cite[Theorem 4]{D97} that $|t \, \frac{\partial}{\partial t} \, p_t^M (x, y)|$ also satisfies the upper bound of type $(GUE_{\theta})$. Using the time derivative estimates of the heat kernel on real hyperbolic spaces (see \cite[Section 5]{LS17}),  a similar argument allows us to obtain the weak $(1, 1)$ property of $\HH_1$.
\end{remark}

\medskip

\renewcommand{\theequation}{\thesection.\arabic{equation}}
\section{Some Generalizations}
\setcounter{equation}{0} \label{S6}

Note that our method works without major difficulties for a large class of direct product manifolds. In this section, we discuss some generalizations.

Consider the direct product manifold $\E \times M$, where $M$ is the same as in Theorem \ref{nt1} or \ref{nt2}, and $\E$ is a connected complete non-compact weighted Riemannian manifold whose volume growth at infinity is at most exponential. We also denote by $r$ the geodesic distance on $\E$. Given $\sigma \in [0, \frac12)$, $\tau \in (0, \frac12]$ and $r_0, \varepsilon > 0$, we define integral operators $\K_i$ ($ = \K_{i, \loc}+\K_{i, \infty}$) $(i = 1, 2)$ on $\E$, with the following kernels respectively,
\begin{align}
K_1(u, v) &= K_{1, \loc}(u, v) + K_{1, \infty}(u, v) \nonumber \\
& := \int_{0}^{\infty} |\nabla_\E p_t^\E (u, v) | \, \frac{\d t}{t^\sigma} \, \chi_{\{ r \leq r_0 \}}  \, + \, \int_{0}^{\infty}  |\nabla_\E p_t^\E(u, v) | \, \e^{\varepsilon \frac{r}{t}} \, \frac{\d t}{t^\sigma} \, \chi_{\{ r > r_0 \}},
\end{align}
and
\begin{align}
K_2(u, v) & = K_{2, \loc}(u, v) + K_{2, \infty}(u, v)    \nonumber  \\
& := \sup_{t > 0} t^\tau \, p_t^\E (u, v) \, \chi_{\{ r \leq r_0 \}} \, + \, \sup_{t > 0}  t^\tau \, p_t^\E (u, v) \, \e^{\varepsilon \frac{r}{t}} \, \chi_{\{ r > r_0 \}}.
\end{align}
We are thus led to the following generalization of Theorems \ref{nt1} and \ref{nt2}.

\begin{theo} \label{general riesz}
Assume that $\E$ satisfies $(ND_{\loc})$ and $(VE)$, whose heat kernel $p_t^\E$ has the small-time upper bound
$$  p_t^\E(u , u)  \leq \frac{C}{V_\E (u, \sqrt{t})}, \quad \forall \, u \in \E, \, \, 0< t \leq 1.$$
Furthermore, suppose that there exist $\sigma \in [0, \frac12)$, $\tau \in (0, \frac12]$ and $r_0, \varepsilon > 0$, such that the integral operators $\K_i$ $(i = 1, 2)$ are of weak type $(1, 1)$. Then the Riesz transform defined on $\E \times M$ is of weak type $(1, 1)$, where $M$ is as described in Theorem \ref{nt1}.
\end{theo}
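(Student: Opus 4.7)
The plan is to mirror, step by step, the argument used for $\H^n \times M$ in Sections \ref{S2}--\ref{S3}, replacing $\H^n$ by $\E$ and replacing the explicit heat-kernel inputs on $\H^n$ by the weak $(1,1)$ hypotheses on the auxiliary operators $\K_1, \K_2$. First, Corollary \ref{c1} reduces everything to the part at infinity: the hypotheses on $\E$ (namely $(ND_{\loc})$, $(VE)$ and the diagonal small-time bound) together with the assumptions on $M$ ensure that $\RR_{\loc}$ on $\W := \E \times M$ is weak $(1,1)$ provided $\RR_\infty$ is. (If necessary, rescale the threshold $1$ in the construction of $\RR_{\loc}$, $\RR_\infty$ to match the $r_0$ of the hypothesis; this is purely cosmetic.) Writing $\nabla_\W = (\nabla_\E, \nabla_M)$ and $p_t^\W = p_t^\E \otimes p_t^M$, I then split $\RR_\infty \le \RR_{1,\infty} + \RR_{2,\infty}$ exactly as in Section \ref{S3}.

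Next, decompose the integration region $\{d > r_0\}$ as $S_1 \cup S_2$ with $S_1 = \{r \le r_0, \widetilde{d} > r_0\}$ and $S_2 = \{r > r_0\}$. On $S_1$ factor
\[
R_1(\X,\Y)\chi_{S_1} \lesssim K_{1,\loc}(u,v) \cdot \sup_{t>0} t^{\sigma - 1/2}\, p_t^M(x,y)\,\chi_{\{\widetilde{d} > r_0\}},
\]
\[
R_2(\X,\Y)\chi_{S_1} \lesssim K_{2,\loc}(u,v) \cdot \int_0^\infty |\nabla_M p_t^M(x,y)|\,\frac{\d t}{t^{\tau + 1/2}}\,\chi_{\{\widetilde{d} > r_0\}};
\]
on $S_2$ insert the decoupling $1 = \e^{\varepsilon r/t}\cdot\e^{-\varepsilon r/t}$ to obtain
\[
R_1(\X,\Y)\chi_{S_2} \lesssim K_{1,\infty}(u,v) \cdot \sup_{t>0} t^{\sigma - 1/2}\, p_t^M(x,y)\,\e^{-\varepsilon r/t},
\]
\[
R_2(\X,\Y)\chi_{S_2} \lesssim K_{2,\infty}(u,v) \cdot \int_0^\infty |\nabla_M p_t^M(x,y)|\,\e^{-\varepsilon r/t}\,\frac{\d t}{t^{\tau + 1/2}}.
\]
Since $\K_i = \K_{i,\loc}+\K_{i,\infty}$ has non-negative kernel, the domination $\K_{i,\loc}|f|, \K_{i,\infty}|f| \le \K_i |f|$ transfers weak $(1,1)$ to each summand separately. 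Lemma \ref{weak type 11} then reduces the remaining task to verifying that the four $M$-side kernels listed above have bounded row-sums (uniformly in $r$, for the $S_2$ cases where possible).

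The $M$-side verifications for the $S_1$ pieces and for the $R_1$ piece on $S_2$ follow the template of \eqref{nBn3}--\eqref{nBn4}, \eqref{nBn6}--\eqref{nBn7} and \eqref{whole space gradient norm}. One combines $(GUE_\theta)$ with $(ND)$ and the standard annular decomposition in $\widetilde{d}$; for the $S_2$ factor with $R_1$, the exponential gain $\e^{-\varepsilon r/t}$ with $r > r_0$ neutralises the small-$t$ blowup of $t^{\sigma-1/2}$, yielding an $r$-independent bound, so Lemma \ref{weak type 11} applies in a single stroke.

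The main obstacle is the $R_2$, $S_2$ piece, where the $M$-side genuinely depends on $r$: Fubini with $(WGE_{1,\theta})$ gives only
\[
\int_M \!\int_0^\infty \!|\nabla_M p_t^M|\,\e^{-\varepsilon r/t}\,\frac{\d t}{t^{\tau+1/2}}\,\d\mu_M \;\lesssim\; \int_0^\infty \frac{\e^{-\varepsilon r/t}}{t^{\tau+1}}\,\d t \;\lesssim_\varepsilon\; r^{-\tau}
\]
after the substitution $s = r/t$. To exploit this I would, following the proof of Theorem \ref{t4}, decompose $S_2 = \bigsqcup_{j\ge 0} \{2^j r_0 < r \le 2^{j+1} r_0\}$. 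On the $j$-th annulus the $M$-side row-sum is $\lesssim (2^j r_0)^{-\tau}$ and the $\E$-side operator inherits weak $(1,1)$ norm from $\K_{2,\infty}$, so Lemma \ref{weak type 11} yields a weak $(1,1)$ operator of norm $\lesssim 2^{-j\tau}$. Because $\tau > 0$, \cite[Lemma 2.3]{SW69} sums the dyadic pieces into a single weak $(1,1)$ operator, which finishes the proof.
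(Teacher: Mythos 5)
Your proposal is correct and follows exactly the route the paper intends: the paper omits the proof of Theorem \ref{general riesz}, stating that the arguments for Theorems \ref{nt1} and \ref{nt2} apply with slight modification, and your write-up is precisely that adaptation (Corollary \ref{c1} for the local part, the $S_1$/$S_2$ splitting with the factors $K_{i,\loc}$, $K_{i,\infty}$ in place of the explicit hyperbolic estimates, Lemma \ref{weak type 11}, and the dyadic decomposition in $r$ summed via \cite[Lemma 2.3]{SW69} for the $R_2$, $S_2$ piece). The exponent bookkeeping ($t^{-1/2}=t^{-\sigma}\cdot t^{\sigma-1/2}=t^{\tau}\cdot t^{-\tau-1/2}$, with $\sigma<\tfrac12$ and $\tau>0$ giving the gain $2^{-j\tau}$) is exactly the intended modification.
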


\begin{theo} \label{general heat maximal}
Assume that $\E$ satisfies $(ND_{\loc})$ and $(VE)$, whose heat kernel $p_t^\E$  has the small-time upper bound
$$  p_t^\E(u , u)  \leq \frac{C}{V_\E(u, \sqrt{t})}, \quad \forall \, u \in \E, \, \, 0< t \leq 1. $$
Furthermore, suppose that there exist $\tau \in (0, \frac12]$ and $r_0, \varepsilon > 0$, such that the integral operator $\K_2$ is of weak type $(1, 1)$. Then the heat maximal operator defined on $\E \times M$ is of weak type $(1, 1)$, where $M$ is as described in Theorem \ref{nt2}.
\end{theo}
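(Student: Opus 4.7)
The argument will parallel that of Theorem \ref{t5}, replacing the concrete estimates on $\H^n$ with the abstract weak type $(1,1)$ hypothesis on $\K_2$. I plan to set $\rho := \max(r_0, 1)$ and decompose $\HH(f) \le \HH_{\loc}(f) + \HH_\infty(f)$ according to whether $d(X,Y) \le \rho$ or $d(X,Y) > \rho$. The local part $\HH_{\loc}$ is handled via the local heat kernel estimate on $\E \times M$, obtained exactly as in \eqref{local on W} under the present hypotheses (the small-time diagonal bound together with $(ND_{\loc})$ and $(VE)$ on $\E$, plus $(GUE_\theta)$ on $M$), now with constants allowed to depend on $\rho$. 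This dominates $\HH_{\loc}$ by a constant multiple of the corresponding local centered Hardy-Littlewood maximal operator on $\M$, whose weak type $(1,1)$ follows from $(ND_{\loc})$ on $\M$ (guaranteed by Remark \ref{r0}).

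For $\HH_\infty$, I combine the factorization $p_t^\M = p_t^\E \cdot p_t^M$ with the trivial bound $\sup_t (a_t b_t) \le \sup_t a_t \cdot \sup_t b_t$, split at $r = r_0$. On the sub-region $\{r \le r_0\}$ (within $\{d > \rho\}$, which forces $\widetilde{d} > \rho \ge 1$ since $r \le r_0 \le \rho < d = \max(r, \widetilde{d})$) I write
$$\sup_{t>0} p_t^\M(X,Y) \le \sup_{t>0} [t^\tau p_t^\E(u,v)] \cdot \sup_{t>0} [t^{-\tau} p_t^M(x,y)] = K_{2,\loc}(u,v) \cdot \sup_{t>0} [t^{-\tau} p_t^M(x,y)].$$
On the sub-region $\{r > r_0\}$, I insert the weights $\e^{\pm\varepsilon r/t}$ on each factor to get
$$\sup_{t>0} p_t^\M(X,Y) \le K_{2,\infty}(u,v) \cdot \sup_{t>0} [t^{-\tau} p_t^M(x,y) \, \e^{-\varepsilon r/t}].$$
Since $K_{2,\loc}, K_{2,\infty} \le K_2$ pointwise and all kernels are non-negative, the hypothesized weak type $(1,1)$ of $\K_2$ transfers at once (by pointwise domination) to the integral operators on $\E$ with kernels $K_{2,\loc}$ and $K_{2,\infty}$.

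To apply Lemma \ref{weak type 11} in each sub-region, it remains to establish a uniform column-$L^1$ bound on the $M$-side kernel. Mimicking the computation in \eqref{nBn3}--\eqref{nBn4} and \eqref{nBn6}--\eqref{nBn7}, $(GUE_\theta)$ together with \eqref{nd} yields $\sup_{t>0} t^{-\tau} p_t^M(x,y) \lesssim \widetilde{d}^{-2\tau}\,\widetilde{V}_M(y, \widetilde{d})^{-1}$, which integrates to a bounded quantity over $\{\widetilde{d} > \rho\}$ uniformly in $y$ precisely because $\tau > 0$; this handles the $\{r \le r_0\}$ sub-region. On $\{r > r_0\}$, the same bound takes care of $\{\widetilde{d} > 1\}$, while on $\{\widetilde{d} \le 1\}$ the suppression $\e^{-\varepsilon r/t} \le \e^{-\varepsilon r_0/t}$ kills the small-$t$ singularity and gives $\sup_{t>0} t^{-\tau} p_t^M(x,y) \, \e^{-\varepsilon r/t} \lesssim_{r_0, \varepsilon} \widetilde{V}_M(y,1)^{-1}$, trivially integrable on $\{\widetilde{d} \le 1\}$ uniformly in $y$. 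Lemma \ref{weak type 11} then produces weak type $(1,1)$ of $\HH_\infty$, and hence of $\HH$. The main technical point is this bookkeeping of the splits and verification of the $M$-side bounds; beyond that, no further input is needed.
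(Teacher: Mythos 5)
Your proposal is correct and is essentially the paper's intended argument: the paper omits the proof of this theorem, stating it is a slight modification of the proof of Theorem \ref{nt2} (i.e.\ of Theorem \ref{t5} plus the local estimate \eqref{local on W}), and your write-up carries out exactly that modification, with $\K_{2,\loc}$, $\K_{2,\infty}$ playing the roles of \eqref{nZn2} and Lemma \ref{weak type on hn}/\eqref{nZn4}, and the $M$-side bounds and Lemma \ref{weak type 11} used just as in \eqref{nBn3}--\eqref{nBn7}. The only cosmetic point is that $(ND_{\loc})$ of $\E\times M$ comes from $(ND_{\loc})$ of $\E$ together with $(ND)$ of $M$ (as noted in Section \ref{main}), rather than from Remark \ref{r0}.
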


The proofs of Theorems \ref{nt1} and \ref{nt2} are organized in such a way that they could be applied to this more general setting 
 via a slight modification. So we omit the proofs of Theorems \ref{general riesz} and \ref{general heat maximal}. It is worth noting that the assumptions of Theorem \ref{general riesz} actually imply the weak type $(1, 1)$ estimate of the Riesz transform defined on $\E$. In fact, it follows from spectral analysis that the weak type $(1, 1)$ estimate of $\K_{1, \infty}$ can deduce that of $\nabla_\E(-\Delta_\E)^{-s}\,\chi_{\{r > r_0 \}}$ for $s \in (0,  1-\sigma]$. While by Theorem \ref{Rloc}, the assumptions on $\E$ imply the weak type $(1, 1)$ estimate of $\nabla_\E(-\Delta_\E)^{-1/2} \, \chi_{\{r \leq r_0 \}}$. Similarly, the weak type $(1, 1)$ of the heat maximal operator defined on $\E$ follows.

\begin{remark}
(1) It is worth pointing out that the factor $\e^{\varepsilon \frac{r}{t}}$ introduced in the definition of $\K_{i, \infty}$$(i = 1, 2)$ is not unique. For example, we can replace it by $(1 + \frac{r}{t})^\varsigma$ with $\varsigma \geq \frac{\upsilon + 1}{2}$, where $\upsilon > 0$ is the constant in \eqref{nd}, with a slight modification of the proof. The advantage of using $\e^{\varepsilon \frac{r}{t}}$ lies in the fact that it only depends on properties of $\E$.

(2)  Theorems \ref{general riesz} and \ref{general heat maximal} remain valid if $M$ is a Lie group with polynomial growth.

\end{remark}

\subsection{Sufficient conditions for $L^1$ boundedness of $\K_{i, \loc}$} \label{S6.1}

Since $\E$ satisfies $(ND_{\loc})$, even more is expected to be said about the local operators $\K_{i, \loc}$ $(i = 1, 2)$. In fact, they will be bounded on $L^1(\E)$ under some additional assumptions, particularly for $\E$ with a positive bottom of the spectrum, such as $\H^n$. Before going further, we first recall that the estimates established in Section \ref{SS32}, particularly \eqref{truncated estimate} and \eqref{Local Estimate}, remain valid on $\E$. Hence they will be used directly.

We begin by analysing $\K_{1, \loc}$.
\begin{align*}
\int_\E K_{1, \loc}(\u, \v)  \, \d \mu_\E(\u) & = \int_{0}^{\infty} \! \bigg( \int_{\{\u; \, r \leq r_0 \}} |\nabla_\E p_t^\E(\u, \v) | \, \d \mu_\E(\u) \bigg) \, \frac{\d t}{t^\sigma}\\
& = \left( \int_{0}^{1} + \int_{1}^{\infty} \right) \! \bigg( \int_{\{\u; \, r \leq r_0 \}} |\nabla_\E p_t^\E (\u, \v) | \, \d \mu_\E (\u) \! \bigg) \, \frac{\d t}{t^\sigma}.
\end{align*}
By \eqref{truncated estimate}, we have
$$  \sup_{\v \in \E} \int_{0}^{1} \! \bigg( \int_{\{\u; \, r \leq r_0 \}} |\nabla_\E p_t^\E (\u, \v) | \, \d \mu_\E (\u) \bigg) \, \frac{\d t}{t^\sigma} \lesssim \int_{0}^{1} \frac{\d t}{t^{\sigma + 1/2}}  < +\infty.  $$

On the other hand, when $t \geq 1$, by Cauchy-Schwarz inequality, we get
\begin{align} \label{gradient norm}
\int_{\{\u; \, r \leq r_0 \}} |\nabla_\E p_t^\E(u, v) | \, \d \mu_\E (u) & \leq V_\E (v, r_0)^{1/2} \, \bigg( \int_\E |\nabla_\E p_t^\E (u, v) |^2 \, \d \mu_\E(u) \bigg)^{1/2}.
\end{align}
Using integration by parts, Cauchy-Schwarz inequality implies that
\begin{align} \label{analytic}
\int_{\E} |\nabla_\E \e^{t \Delta_\E/2} g(\u)|^2 \, \d \mu_\E(\u)  &= -\int_{\E} g(\u) \, (\Delta_\E \e^{t\Delta_\E} g)(\u) \, \d \mu_\E(\u)\nonumber\\
& \leq \| g \|_2 \,  \| \Delta_\E \e^{t\Delta_\E} g \|_2 \leq t^{-1} \| g \|_2^2, \qquad \forall \, t > 0, \, g \in C_0^\infty(\E),
\end{align}
where we have used in the last inequality the analyticity of $(\e^{t\Delta_\E})_{t > 0}$ on $L^2$. It follows from the semigroup property that $p_t^\E(\u, \v) = \e^{{t\Delta_\E}/2}(p_{t/2}^\E(\cdot, \v))(\u)$. Therefore combining \eqref{gradient norm} and \eqref{analytic} gives
$$
\int_{\{\u; \, r \leq r_0 \}} |\nabla_\E p_t^\E(\u, \v) | \, \d \mu_\E (u) \leq \frac{V_\E (\v, r_0)^{1/2} \, \| p_{t/2}^\E(\cdot, \v) \|_2}{t^{1/2}} = \frac{\sqrt{V_\E (v, r_0) \, p_t^\E(\v, \v)}}{t^{1/2}}.
$$

Finally, we conclude that if
\begin{align} \label{criterion1}
\sup_{\v \in \E} \int_{1}^{\infty} \frac{\sqrt{V_\E (v, r_0) \, p_t^\E(\v, \v) }}{t^{\sigma + 1/2}} \, \d t < +\infty
\end{align}
holds, then $\K_{1, \loc}$ is bounded on $L^1(\E)$.

Now, we turn to the integral operator $\K_{2, \loc}$. It follows from \eqref{Local Estimate} that there exists a constant $C > 0$, such that for all $u, v \in \E$ with $r = r(u, v)  \le r_0$,
\begin{align} \label{K2 loc}
K_{2 ,\loc}(u, v) & \leq \sup_{0< t \le 1} t^\tau \, p_t^\E (u, v) \, + \, \sup_{t > 1} t^\tau \, p_t^\E (u, v)  \nonumber\\
& \lesssim \sup_{0 < t \leq 1} \frac{t^\tau}{V_\E (u, \sqrt{t})} \e^{-\frac{r^2}{Ct}} \, + \, \sup_{t > 1} t^\tau \, p_t^\E (u, v).
\end{align}

First, we claim that there exist positive constants $\upsilon_1$ and $\upsilon_2$ such that
\begin{align} \label{polynomial}
\frac{V_\E (u, R_1)}{V_\E (u, R_2)}  \lesssim_{r_0} \bigg( \frac{R_1}{R_2} \bigg)^{\upsilon_1} \! \! \! + \bigg( \frac{R_1}{R_2} \bigg)^{\upsilon_2}, \qquad \forall \, u \in \E, \, R_1 \in (0, r_0], \, R_2 \in (0, 1].
\end{align}
Indeed, when $R_1 \geq R_2$, by iterating $(ND_{\loc})$, there exists $\upsilon_1 > 0$ such that
$$
\frac{V_\E (u, R_1)}{V_\E (u, R_2)}  \lesssim_{r_0} \bigg( \frac{R_1}{R_2} \bigg)^{\upsilon_1}, \qquad  0 < R_1 \le r_0.
$$
When $R_1 < R_2$, the so-called (local) reversed volume doubling property (see  \cite[page 412]{G09}), which is a direct consequence of $(ND_{\loc})$ and the connectedness of $\E$, implies that there exists $\upsilon_2 > 0$ such that
$$
\frac{V_\E (u, R_1)}{V_\E (u, R_2)} \lesssim_{r_0} \bigg( \frac{R_1}{R_2} \bigg)^{\upsilon_2}.
$$
In both cases, \eqref{polynomial} is true.

Then applying \eqref{polynomial} to the first term in \eqref{K2 loc}, we have
\begin{align}
\sup_{0 < t \leq 1} \frac{t^\tau}{V_\E (u, \sqrt{t})} \e^{-\frac{r^2}{C t}} \lesssim_{r_0} \sup_{0 < t \leq 1} \bigg(\bigg( \frac{r}{\sqrt{t}} \bigg)^{\upsilon_1-2\tau} \! \! \! + \bigg( \frac{r}{\sqrt{t}} \bigg)^{\upsilon_2-2\tau}  \bigg) \e^{-\frac{r^2}{Ct}} \cdot \frac{r^{2\tau}}{V_\E (u, r)}.
\end{align}
By the same argument used in \eqref{nBn5}, we deduce that for any $\tau > 0$,
$$
\sup_{\v \in \E} \int_{\{\u; \, r \leq r_0 \} } \frac{r^{2\tau}}{V_\E (u, r)} \, \d \mu_\E(u) < +\infty.
$$
Set $\upsilon' = \min \{\upsilon_1, \upsilon_2\}$. Therefore if $\tau \in (0, \frac{\upsilon'}{2}]$, the integral operator with the kernel $\sup_{0 < t \le 1} t^\tau \, p_t^\E \chi_{\{ r \leq r_0 \}}$ is bounded on $L^1(\E)$.

In conclusion, $\K_{2, \loc}$ is bounded on $L^1(\E)$ whenever there exists $\tau \in (0, \frac{\upsilon'}{2}]$ such that
\begin{align} \label{criterion2}
\sup_{\v \in \E} \int_{\{\u; \, r \le r_0 \}} \sup_{t > 1} t^\tau \, p_t^\E (u, v) \, \d \mu_\E (u) < +\infty.
\end{align}

In particular, if $\E$ has a positive bottom of the spectrum $\lambda_1$, then by \eqref{Local Estimate} and $(ND_{\loc})$, the conditions \eqref{criterion1} and \eqref{criterion2} can be verified easily.

\subsection{More specific examples}

To end this section, we present more specific examples of $\E$. We start with the following simple but instructive model. Given $\alpha > 0$, for non-negative numbers $a_1$ and $a_2$ that are not all zero, the function
$$ h(u) = a_1 \e^{\alpha u} + a_2 \e^{-\alpha u} $$
is a positive $\alpha^2$-harmonic function defined on $\R$. Moreover, all the positive eigenfunctions of $\Delta_{\R}$ corresponding to a positive eigenvalue take this form. Let $\mu$ be the measure on $\R$ defined by $\d \mu = h^2 \d u$, where $\d u$ is the Lebesgue measure. We consider the weighted Riemannian manifold $\E = (\R, \d \mu)$, equipped with the canonical Euclidean metric. It is easily seen that for all $u \in \E$,
\begin{equation*}
 V_\E (u, R)  \, \sim \,
 \begin {cases}
 h^2(u) R  & \text{if \, $0 < R \leq 1$,} \\
  h^2(u) \e^{2 \alpha R}   & \text{if \, $R>1$.} \\
   \end {cases}
\end{equation*}

The (weighted) Laplace operator on $\E$ is as follows:
$$ \Delta_\E  = \frac{\d^2}{\d u^2} + 2 \frac{h'}{h} \frac{\d}{\d u} = \frac{\d^2}{\d u^2} + 2\alpha \frac{a_1\e^{2 \alpha u}-a_2}{a_1 \e^{2\alpha u}+ a_2 } \frac{\d}{\d u}.   $$

By Doob's $h$-transform (see \cite{G09}), the heat kernel on $\E$ is explicitly given by
$$ p_t^\E(u ,v)  = \frac{1}{\sqrt{4\pi t}} \exp{\left(-\frac{r^2}{4t} - \alpha^2 t\right)} \frac{1}{h(u)h(v)},$$
and a direct computation gives rise to
$$ |\nabla_\E p_t^\E (u, v)  | \lesssim  t^{-\frac32} \left( r + t \right) \exp{\left(-\frac{r^2}{4t} - \alpha^2 t\right)} \frac{1}{h(u)h(v)}, $$
where $r = r(u, v) = |u - v|$.

To apply Theorems \ref{general riesz} and \ref{general heat maximal}, it only remains to verify the weak type $(1, 1)$ of $\K_i$ $(i = 1, 2)$, since other assumptions on $\E$ are satisfied trivially. To do this, take $\sigma = 0$, $\tau = \frac12$, $r_0 = 1$ and $\varepsilon \in (0, \frac18)$ in the expressions of $K_i$.  Remark that the local operators $\K_{i, \loc}$ $(i = 1, 2)$ are bounded on $L^1$ because $\lambda_1 = \alpha^2$ in this setting.

On the other hand, we have
\begin{align*}
K_{1, \infty}(u, v) \lesssim \int_{0}^{\infty} t^{-\frac32} \left( r + t \right) \exp{\left(-\frac{r^2}{4t} - \alpha^2 t + \frac{\varepsilon r}{t}\right)} \, \d t \cdot \frac{1}{h(u)h(v)} \lesssim \frac{\e^{-\alpha r}}{h(u)h(v)},
\end{align*}
and
\begin{align*}
K_{2, \infty}(u, v) \lesssim \sup_{t>0}\, \exp{\left(-\frac{r^2}{4t} - \alpha^2 t + \frac{\varepsilon r}{t}\right)} \cdot \frac{1}{h(u)h(v)} \lesssim \frac{\e^{-\alpha r}}{h(u)h(v)}.
\end{align*}
Notice that $\frac{\e^{-\alpha r}}{h(u)h(v)} \leq \frac{1}{h^2(u)} \in L^{1, \infty}(\E)$, the weak type $(1, 1)$ of $\K_i$ follows. Therefore $(\R, \d \mu)$ is an example of $\E$ in our theorems. In particular, it is known that the Euclidean space $\R^n$ endowed with the drifted Laplacian can be viewed as a direct product manifold in essence. Then if we take $\E = (\R, \e^{2u}\d u)$ and $M = (\R^{n - 1}, \d x)$, applying Theorems \ref{general riesz} and \ref{general heat maximal} gives an alternative proof of Theorems 1 and 2 in \cite{LSW16}.

\medskip

We emphasize that the assumptions on $\E$ could be verified on some manifolds with a positive bottom of the spectrum. As illustrated in Section \ref{S6.1}, the main difficulty in this context is the weak $(1, 1)$ of $\K_{i, \infty}$ $(i = 1,2)$, which usually follows from the analogues of Lemma \ref{weak type on hn}, involving sharp estimates of heat kernel and its gradient. According to the known results on the weak type $(1, 1)$, Theorems \ref{general riesz} and \ref{general heat maximal} are valid when $\E$ is taken as the following manifolds mentioned in Section \ref{SS12}: non-compact symmetric spaces, harmonic $AN$ groups, the Laplacian with drift on real hyperbolic spaces, as well as the sub-Laplacian with drift on Heisenberg groups. As for specific properties of guaranteeing the validity of our assumptions, we refer the reader to \cite{A92, S81}, \cite{ADY96}, \cite{LS17} and \cite{LS20}. The details are left to the reader.

\medskip

\renewcommand{\theequation}{\thesection.\arabic{equation}}
\section{Appendix: two counterexamples for $p > 2$}
\setcounter{equation}{0} \label{S7}
As already mentioned in our introduction, the $L^p$ boundedness ($p > 2$) of the Riesz transform on $\M = \H^n \times M$ is a quite subtle problem. In general, $(ND)$, $(GUE_{\theta})$ together with $(WGE_{q, \theta})$ of $M$ is not sufficient. To illustrate this, we shall present two typical counterexamples. In this appendix, we always assume $M$ satisfies $(D)$ and $(UE_2)$, i.e. the quasi-distance $\widetilde{d}$ is the geodesic distance $d_M$. Let us recall that $(WGE_{2, 1})$ is a corollary of our assumptions.

\medskip

\noindent{\bf Example \ref{S7}.1 } For $k \ge 2$, let $M = \R^k \# \R^k$ be the connected sum of two copies of $\R^k$, where the Riesz transform is bounded on $L^p$ iff $1 < p \le 2$ when $k = 2$ and $1 < p < k$ when $k \ge 3$ (see \cite{CD99,CCH06}). By its construction, the Ricci curvature $\Ric_M$ equals zero identically outside a compact set, hence is bounded from below. On the other hand,
it is well-known that $\Ric_{\H^n} \equiv -(n - 1)$.
Therefore the Ricci curvature is bounded from below on $\M = \H^n \times M$. Then applying the main theorem of \cite{B87}, we obtain for any given $1 < p < +\infty$,
$$
\| \nabla_\M f \|_p \lesssim \| (- \Delta_\M)^{\frac12} f   \|_p + \| f  \|_p, \quad \forall \, f \in C_0^\infty(\M).
$$
And one easily sees that $\M$ has a positive bottom of the spectrum. By interpolation,
$$
\| f \|_p \lesssim \|  (- \Delta_\M)^{\frac12} f \|_p, \quad \forall \, f \in C_0^\infty(\M).
$$
Combining with the above inequalities, the $L^p$ boundedness of the Riesz transform on $\M$ holds for any $1 < p < +\infty$. It is a bit surprising since taking direct product enlarges the range of $p$ for the $L^p$ boundedness of the Riesz transform. In addition, this argument does not use $(D)$ and $(UE_2)$, so the same phenomenon happens when $M$ is taken as the connected sums considered in \cite{HS19}.

\medskip

\noindent{\bf Example \ref{S7}.2 } Let $M$ be a $k$-dimensional conical manifold with a compact boundaryless basis. The Riesz transform on $M$ has been studied in \cite{Li99}. Let $\lambda_1$ be the smallest strictly positive eigenvalue of the basis. Then the $L^p$ boundedness of the Riesz transform is valid iff $1 < p < p_0$, where
\begin{equation*}
 p_0  \, = \,
 \begin {cases}
 k \bigg( \frac{k}{2} - \sqrt{\big( \frac{k-2}{2} \big)^2 + \lambda_1} \bigg)^{-1}  & \text{if \, $\lambda_1 < k-1$,} \\
 \, \, \, +\infty   & \text{if \, $\lambda_1 \ge k - 1$.} \\
   \end {cases}
\end{equation*}
We shall only focus on the case $\lambda_1 < k - 1$. For any fixed $p \ge p_0$, suppose that the Riesz transform on $\M = \H^n \times M$ is bounded on $L^p$, i.e.
\begin{equation} \label{assumption}
\| \nabla_\M F   \|_p \lesssim \| (-\Delta_\M)^{\frac12} F \|_p, \quad \forall \, F \in D((-\Delta_\M)^{\frac12}).
\end{equation}

By the results of \cite[p. 239]{CL04}, there exists $g \in C_0^\infty (M)$ and $t_0 > 0$, such that
\begin{align*}
\| \nabla_M \e^{t_0 \Delta_M} g \|_p = +\infty.
\end{align*}
Now we choose an $f \in C_0^\infty (\H^n)$ that is not identically 0, and set
\begin{align*}
G(\u, \x) & := \e^{t_0 \Delta_{\H^n}}f(\u) \cdot \e^{t_0 \Delta_{M}} g(\x) \\
& = \e^{t_0 \Delta_\M}(f \otimes g) \in D((-\Delta_\M)^{\frac12}).
\end{align*}
Then it is easily checked that
\begin{align*}
\| \nabla_\M G \|_p &\sim \| \nabla_{\H^n} \e^{t_0 \Delta_{\H^n}} f \|_p \cdot \|  \e^{t_0 \Delta_{M}} g \|_p + \| \e^{t_0 \Delta_{\H^n}} f \|_p \cdot \|  \nabla_M \e^{t_0 \Delta_{M}} g \|_p \\
& = +\infty.
\end{align*}
On the other hand, by the analyticity of $(\e^{t\Delta_\M})_{t > 0}$ on $L^p$, we obtain
\begin{align*}
\| (-\Delta_\M)^{\frac12} G \|_p  &= \| (-\Delta_\M)^{\frac12} \e^{t_0 \Delta_\M}(f \otimes g) \|_p \\
& \le t_0^{-\frac12} \, \|  f \otimes g \|_p < +\infty,
\end{align*}
which contradicts \eqref{assumption}. So when $p_0 \leq p < +\infty$, the Riesz transform on $\M$ is not $L^p$ bounded.

\medskip

As for the $L^p$ boundedness ($p > 2$) of the Riesz transform on $\M = \H^n \times M$ for a general $M$, we can apply the criterions developed in \cite{ACDH03}. Indeed, by \cite[Theorem 5.5.3]{S02}, the conjunction of $(ND_{\loc})$ and the local Poincar\'e inequality is equivalent to the two-sided Gaussian heat kernel estimate for small time. Accordingly, \cite[Theorem 1.8]{ACDH03} asserts the following result:

Assume $M$ is a connected stochastically complete non-compact Riemannian manifold satisfying $(D)$, whose heat kernel has the two-sided Gaussian estimate for small time: there exist constants $C$ and $c$, such that for all $x, y \in M$ and $0 < t < 1$,
$$
\frac{c}{V_M(x, \sqrt{t})} \exp{\left( - C \, \frac{d_M(x, \, y)^2}{t} \right)} \leq p_t^M(x, y) \leq \frac{C}{V_M(x, \sqrt{t})} \exp{\left( - c \, \frac{d_M(x, \, y)^2}{t} \right)}.
$$
If for some $p_0 \in (2, +\infty]$ and $\alpha \ge 0$, it holds for all $t > 0$ that
\begin{align*}
\| |\nabla_M \e^{t \Delta_M}| \|_{p_0 \to p_0} \lesssim \frac{\e^{\alpha t}}{\sqrt{t}},
\end{align*}
then the Riesz transform on $\M$ is bounded on $L^p$ for all $p \in [2, p_0)$.

\medskip

\section*{Acknowledgement}
This work is partially supported by NSF of China (Grants No.11625102 and No. 12271102). The second author is also partially supported by the National Key R\&D Program of China (No. 2022YFA1006000, 2020YFA0712900) and NNSFC (11921001). The authors would like to thank the anonymous referees for their many useful suggestions and valuable remarks which improve the writing of the paper.

\bigskip

\mbox{}\\
Hong-Quan Li\\
School of Mathematical Sciences  \\
Fudan University \\
220 Handan Road  \\
Shanghai 200433  \\
People's Republic of China \\
E-Mail: \url{hongquan_li@fudan.edu.cn} \\

\mbox{}\\
Jie-Xiang Zhu\\
Center for Applied Mathematics \\
Tianjin University \\
Tianjin 300072 \\
People's Repubilc of China \\
E-Mail: \url{15110840006@fudan.edu.cn}

\end{document}